\newcommand{\qdn}{\hspace*{-1.5mm}}
\newcommand{\qqdn}{\hspace*{-2.5mm}}
\newcommand{\xqdn}{\hspace*{-5.0mm}}
\newcommand{\xxqdn}{\hspace*{-10mm}}
\newcommand{\ffnk}[4]{\left[\qdn\ba{#1}#3\\[2mm]#4\ea{\!\bigg|\:#2}\right]}
\newcommand{\binm}{\binom}
\newcommand{\nnm}{\nonumber}
\newcommand{\be}{\begin{equation}}
\newcommand{\ee}{\end{equation}}
\newcommand{\ba}{\begin{array}}
\newcommand{\ea}{\end{array}}
\newcommand{\bmn}{\begin{eqnarray}}
\newcommand{\emn}{\end{eqnarray}}
\newcommand{\bnm}{\begin{eqnarray*}}
\newcommand{\enm}{\end{eqnarray*}}
\newcommand{\bln}{\begin{subequations}}
\newcommand{\eln}{\end{subequations}}
\newtheorem{thm}{Theorem}
\newtheorem{conjecture}[thm]{Conjecture}
\newtheorem{corl}[thm]{Corollary}
\newtheorem{prop}[thm]{Proposition}
\newtheorem{entry}{Entry}
\newcommand{\bbtm}[4]{\bibitem{kn:#1}{#2,}~{#3,}~{#4.}}
\newcommand{\cito}[1]{\cite{kn:#1}}
\newcommand{\citu}[2]{\cite[#2]{kn:#1}}
\begin{document} 

\title{Partial theta function identities from \\ Wang and Ma's conjecture}
\author{Chuanan Wei}

\footnote{\emph{2010 Mathematics Subject Classification}: Primary
05A30 and Secondary 33D15}

\dedicatory{Department of Medical Informatics\\
 Hainan Medical University, Haikou 571199, China}
\thanks{\emph{Email address}: weichuanan78@163.com}

\keywords{Basic hypergeometric series; Jacobi triple product
identity; Partial theta function identities}

\begin{abstract}
Recently, Wang and Ma propose a conjecture associated with the
possible generalization of Andrews-Warnaar identities. It is
confirmed in this paper. As the applications of this conjecture, we
prove that a family of series can be expressed by the partial theta
functions and construct some new partial theta function identities.
\end{abstract}

\maketitle\thispagestyle{empty}
\markboth{Chuanan Wei}
         { Partial theta functions identities}
\section{Introduction}

For an integer $n$ and two complex numbers $x$ and $q$ with $|q|<1$,
define the $q$-shifted factorial to be
 \bnm
(x;q)_{\infty}=\prod_{i=0}^{\infty}(1-xq^i),\quad
(x;q)_n=\frac{(x;q)_{\infty}}{(xq^n;q)_{\infty}}.
 \enm
For simplifying the expression, we shall adopt the following
notations:
 \bnm
&&(x_1,x_2,\cdots,x_r;q)_{\infty}=(x_1;q)_{\infty}(x_2;q)_{\infty}\cdots(x_r;q)_{\infty},\\
&&(x_1,x_2,\cdots,x_r;q)_{n}=(x_1;q)_{\infty}(x_2;q)_{\infty}\cdots(x_r;q)_{n}.
 \enm
Following Gasper and Rahman \cito{gasper}, define the basic
hypergeometric series by
 \bnm
{_{1+r}\phi_s}\ffnk{cccccc}{q;z}{a_0,&a_1,&\cdots,a_r}{&b_1,&\cdots,b_s}
  =\sum_{k=0}^{\infty}\frac{(a_0,a_1,\cdots,a_r;q)_k}{(q,b_1,\cdots,b_s;q)_k}
\Big\{(-1)^kq^{\binm{k}{2}}\Big\}^{s-r}z^k.
 \enm
Then a known terminating summation formula (cf. \citu{gasper}{p.
42}) can be stated as
 \bmn\label{phi-65}
  {_6\phi_5}\ffnk{cccccccccc}{q;\frac{aq^{n+1}}{bc}}
 {a,qa^{\frac{1}{2}},-qa^{\frac{1}{2}},b,c,q^{-n}}
 {a^{\frac{1}{2}},-a^{\frac{1}{2}},aq/b,aq/c,aq^{n+1}}=
\frac{(aq,aq/bc;q)_n}{(aq/b,aq/c;q)_n}.
 \emn

Sums of the form
 \[\sum_{n=-\infty}^{\infty}q^{An^2+Bn}x^n(A>0,x\neq0)\]
  are named theta functions. A nice result
is the famous Jacobi triple product identity (cf.
\citu{gasper}{p.15}):
  \bmn\label{jacobi}
 \quad\qquad\sum_{n=-\infty}^{\infty}(-1)^nq^{\binm{n}{2}}x^n=(q,x,q/x;q)_{\infty}.
  \emn
Sums of the form
 \bmn \label{theta}
  \sum_{n=0}^{\infty}q^{An^2+Bn}x^n(A>0)
  \emn
  are called partial theta functions because of the above fact (cf. \cite{kn:andrews-a,kn:andrews-b}).
   Partial theta function identities first appeared
in Ramanujan's lost notebook \cito{ramanujan}. Two beautiful results
from page 4 and page 12 of it are laid out as follows:
 \bnm
&&\xxqdn\sum_{n=0}^{\infty}q^{\frac{n(n+1)}{2}}a^n=\sum_{n=0}^{\infty}\frac{(q;q^2)_nq^n}{(a;q)_{n+1}(q/a;q)_n}
+\sum_{n=0}^{\infty}\frac{(-1)^{n+1}q^{n(n+1)}a^{2n+1}}{(-q,a,q/a;q)_{\infty}},\\
&&\xxqdn\sum_{n=0}^{\infty}q^{n(n+1)}a^n=\sum_{n=0}^{\infty}\frac{(q^{n+1};q)_nq^n}{(a;q)_{n+1}(q/a;q)_n}
-\sum_{n=0}^{\infty}\frac{q^{n(3n+2)}a^{3n+1}(1-aq^{2n+1})}{(a,q/a;q)_{\infty}}.
 \enm
More details on this kind of identities be found in Andrews and
Berndt \citu{andrews-c}{Chapter 6}.

An interesting partial theta function identity due to Warnaar
\citu{warnaar}{Theorem 1.5} reads
 \bmn\label{warnaar-a}
 \theta(q,a)+\theta(q,b)-1=(q,a,b;q)_{\infty}\sum_{n=0}^{\infty}\frac{(ab/q;q)_{2n}}{(q,a,b,ab;q)_n}q^n,
  \emn
where the symbol on the left-hand side is
 \[\theta(q,x)=\sum_{n=0}^{\infty}(-1)^nq^{\binm{n}{2}}x^n.\]
Obviously, \eqref{warnaar-a} is a generalization of \eqref{jacobi}.
Subsequently, Andrews and Warnaar \citu{andrews-d}{Theorem 1.1} gave
the product formula of partial theta functions:
\bmn\label{warnaar-b}
\theta(q,a)\theta(q,b)=(q,a,b;q)_{\infty}\sum_{n=0}^{\infty}\frac{(ab/q;q)_{2n}}{(q,a,b,ab/q;q)_n}q^n.
\emn
 Berkovich \cito{berkovich} showed that \eqref{warnaar-a} is equivalent to the
following result due to Schilling and Warnaar \citu{schilling}{Lemma
4.3}:
  \bmn\label{warnaar-c}
\frac{
\theta(q,a)-\theta(q,b)}{a-b}=-(q,aq,bq;q)_{\infty}\sum_{n=0}^{\infty}\frac{(ab;q)_{2n}}{(q,aq,bq,ab;q)_n}q^n.
  \emn
It should be mentioned that Alladi and Berkovich \cito{alladi}
derived another two-parameter generalization of \eqref{jacobi}:
 \bnm
&&\sum_{n=0}^{\infty}(-1)^nq^{\binm{n}{2}+2n}\bigg\{\frac{a}{a-1}\theta(q,aq^{1+n})+\frac{b}{b-1}\theta(q,bq^{1+n})\bigg\}\frac{(ab;q)_n}{(q;q)_n}\\
&&\:\:+\:\frac{1-ab}{(1-a)(1-b)}\sum_{n=0}^{\infty}(-1)^nq^{\binm{n}{2}+2n}\theta(q,q^{1+n})\frac{(ab;q)_n}{(q;q)_n}=(q,aq,bq;q)_{\infty}.
 \enm
For more formulas related to partial theta functions, the reader is
referred to the papers \cite{kn:berndt,kn:jo,kn:kostov,kn:ma-a}.

Recently, Wang and Ma \cito{ma-b} established the following theorem.

\begin{thm}\label{thm-a}
 Let $m,n$ be nonnegative integers and $a,b$ complex numbers. Then
 \[U_m(b)\theta(q,a)=(q,a,b;q)_{\infty}\sum_{n=0}^{\infty}\frac{(abq^{n-1};q)_n}{(q,a;q)_n}\frac{V_{m,n}(a,b)}{(b;q)_{m+n}}q^n,\]
where the signs $U_m(b)$ and $V_{m,n}(a,b)$ denote
 \bnm
 &&U_m(b)=\sum_{k=0}^{\infty}\frac{(q^{1-m};q)_k}{(q;q)_k(bq^k;q)_{m}}(1-bq^{2k})b^{2k}q^{2k^2-k+mk},\\
&&V_{m,n}(a,b)={_2\phi_1}\ffnk{cccccccccc}{q;bq^{m+n}}{q^{-m},q^{-n}}{abq^{n-1}}.
  \enm
\end{thm}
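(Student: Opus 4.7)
My plan is to verify the identity by expanding the terminating ${}_2\phi_1$-series $V_{m,n}(a,b)$ on the right-hand side and rearranging the resulting double sum. Substituting
$$V_{m,n}(a,b)=\sum_{k=0}^{m}\frac{(q^{-m},q^{-n};q)_k}{(q,abq^{n-1};q)_k}(bq^{m+n})^k$$
and interchanging the order of summation (with $k$ moved outside and $n$ reindexed as $n=j+k$, $j\ge 0$), the factorizations $(abq^{n-1};q)_n=(abq^{n-1};q)_k(abq^{n+k-1};q)_{n-k}$, $(b;q)_{m+n}=(b;q)_{m+k}(bq^{m+k};q)_j$, and the routine conversion of $(q^{-n};q)_k$ into a ratio of Pochhammer symbols should collapse much of the algebra; the $q$-powers collect cleanly into $q^{\binom{k}{2}+(m+1)k+j}$. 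The right-hand side divided by $(q,a,b;q)_\infty$ then becomes
$$\sum_{k=0}^{m}\frac{(q^{-m};q)_k\,(-b)^k\,q^{\binom{k}{2}+(m+1)k}}{(q,a;q)_k\,(b;q)_{m+k}}\,T_k(a,b),\qquad T_k(a,b)=\sum_{j=0}^{\infty}\frac{(abq^{2k-1};q)_{2j}\,q^j}{(q,aq^k,bq^{m+k},abq^{2k-1};q)_j}.$$

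The inner sum $T_k(a,b)$ is strongly reminiscent of Andrews--Warnaar's product formula \eqref{warnaar-b}, but the presence of $(bq^{m+k};q)_j$ in place of $(bq^k;q)_j$ introduces a $q^m$-shift that blocks a direct application. I plan to bridge this gap by rewriting $(bq^{m+k};q)_j^{-1}=(bq^k;q)_m/(bq^k;q)_{m+j}$, so that the residual $m$-dependence sits in a single Pochhammer factor, and then apply either a suitably regrouped form of \eqref{warnaar-b} or the terminating ${}_6\phi_5$ summation \eqref{phi-65} to handle the remaining finite correction. The outcome should express $T_k(a,b)$ as a combination of the shifted theta functions $\theta(q,aq^k)$, $\theta(q,bq^k)$ and Pochhammer quotients. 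Identifying the correct transformation to absorb the $q^m$-shift is the principal technical obstacle.

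Once $T_k(a,b)$ is in closed form, $\theta(q,a)$ can be extracted from the outer $k$-sum by invoking the shift relation $\theta(q,aq^k)=(-a)^{-k}q^{-\binom{k}{2}}\bigl[\theta(q,a)-\sum_{n=0}^{k-1}(-1)^nq^{\binom{n}{2}}a^n\bigr]$; the partial-sum corrections are expected to telescope out across $0\le k\le m-1$, leaving only $\theta(q,a)$ times a finite polynomial in $b,q$. The task then reduces to checking that this residual finite sum coincides with $U_m(b)$, with the characteristic factor $(1-bq^{2k})\,b^{2k}\,q^{2k^2-k+mk}$ emerging from the Jacobi-type pairing of consecutive terms in the partial theta function. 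A natural sanity check throughout is the base case $m=0$: there $V_{0,n}(a,b)=1$, and the identity collapses to \eqref{warnaar-b}, since the same pairing gives $U_0(b)=\theta(q,b)$.
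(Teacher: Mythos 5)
First, a point of comparison: this paper does not prove Theorem \ref{thm-a} at all --- it is quoted verbatim from Wang and Ma \cito{ma-b} and then used as the starting point for everything else --- so there is no in-paper proof to measure your argument against; it has to stand on its own. The part of your proposal that is actually carried out is correct: expanding $V_{m,n}$, interchanging the sums and setting $n=j+k$ does produce exactly the outer coefficient $\frac{(q^{-m};q)_k(-b)^kq^{\binom{k}{2}+(m+1)k}}{(q,a;q)_k(b;q)_{m+k}}$ and the inner sum $T_k(a,b)$ that you display (I checked the Pochhammer splittings and the exponent bookkeeping), and your $m=0$ sanity check, including the pairing argument giving $U_0(b)=\theta(q,b)$, is right.

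The gap is that everything after this rearrangement --- which is the entire substance of the theorem --- is only conjectured. You have not evaluated $T_k(a,b)$, and the two tools you propose for doing so cannot work as stated: \eqref{phi-65} sums a \emph{terminating, very-well-poised} series, whereas $T_k$ is a non-terminating series whose numerator factor $(abq^{2k-1};q)_{2j}$ places it in the Andrews--Warnaar family rather than the well-poised one; and \eqref{warnaar-b} is blocked precisely by the $q^m$-shift you identify, with the rewriting $(bq^{m+k};q)_j^{-1}=(bq^k;q)_m/(bq^k;q)_{m+j}$ leaving a $j$-dependent obstruction that no ``finite correction'' removes. In fact, multiplying by the infinite products shows $(q,aq^k,bq^{m+k};q)_\infty T_k(a,b)=P_{m+1}(aq^k,bq^{m+k})$, so your reduction converts Theorem \ref{thm-a} into precisely the statement of Theorem \ref{thm-d} with $m\to m+1$ and $b\to bq^m$ (one checks $Q_{m+1}(b)=U_m(b)$); that is the conjecture this paper \emph{deduces from} Theorem \ref{thm-a}, so appealing to it, or to Theorem \ref{thm-e}, would be circular. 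The subsequent claims --- that each $T_k$ resolves into shifted theta functions, that all $\theta(q,bq^{\cdot})$ contributions cancel across $k$, that the partial-sum corrections from the shift relation telescope, and that the residue equals $U_m(b)$ --- are each plausible but none is established, and together they constitute the theorem itself. To close the argument you would need an independent evaluation of the combination $\sum_k(\cdots)T_k$, e.g.\ by the inversion/Bailey-type techniques of Wang and Ma, not by the summation formulas available in this paper.
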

When $m=0$, it reduces to \eqref{warnaar-b}. In the same paper,
Theorem \ref{thm-a} was employed to deduce the following two
surprising results.

\begin{thm}\label{thm-b}
 Let $a$ and $b$ be complex numbers. Then
 \[\theta(q,a)=L(a,b)+bL(aq,bq),\]
where the notation on the right-hand side stands for
 \[L(a,b)=(q,a,b;q)_{\infty}\sum_{n=0}^{\infty}\frac{(ab/q^2;q)_{2n}}{(q,a,b,ab/q^2;q)_n}q^n.\]
\end{thm}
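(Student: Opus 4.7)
The plan is to reduce Theorem~\ref{thm-b} to the Schilling--Warnaar identity \eqref{warnaar-c}, which by the remark of Berkovich recorded above is equivalent to \eqref{warnaar-b}, hence to the $m=0$ case of Theorem~\ref{thm-a}. The key observation is that the sum on the right-hand side of \eqref{warnaar-c} is, up to sign, exactly the sum defining $L$ at shifted arguments: since $(aq)(bq)/q^2 = ab$, applying \eqref{warnaar-c} verbatim identifies
\[
bL(aq,bq) \;=\; \frac{b\,\theta(q,b)-b\,\theta(q,a)}{a-b}.
\]

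Next I would apply \eqref{warnaar-c} once more with $(a,b)$ replaced by $(a/q,b/q)$; now $(a/q)(b/q)=ab/q^2$ matches the factor $(ab/q^2;q)_{2n}$ in the definition of $L(a,b)$, and after clearing $1/q$ from the denominator on the left this yields $L(a,b) = \frac{q(\theta(q,b/q)-\theta(q,a/q))}{a-b}$. To reassemble the right-hand side in terms of $\theta(q,a)$ and $\theta(q,b)$ I would invoke the elementary recurrence $\theta(q,x) + x\,\theta(q,xq) = 1$, a one-line consequence of the definition of $\theta$ together with the arithmetic identity $\binom{m-1}{2} + (m-1) = \binom{m}{2}$. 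Written equivalently as $q\,\theta(q,x/q) = q - x\,\theta(q,x)$, it collapses the previous display to
\[
L(a,b) \;=\; \frac{a\,\theta(q,a)-b\,\theta(q,b)}{a-b}.
\]

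Adding this to the formula for $bL(aq,bq)$, the $b\,\theta(q,b)$ terms cancel and the $\theta(q,a)$ terms combine into $\frac{(a-b)\,\theta(q,a)}{a-b} = \theta(q,a)$, which is precisely the claim of Theorem~\ref{thm-b}. The only genuinely non-routine step is recognising at the outset that, after the single rescaling $(a,b)\mapsto(a/q,b/q)$, the Schilling--Warnaar sum reproduces exactly the two occurrences of $L$ we need; once this bridge is crossed, the remainder is purely a piece of $\theta$-bookkeeping.
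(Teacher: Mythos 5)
Your proof is correct, but it takes a genuinely different route from the paper's. The paper obtains Theorem~\ref{thm-b} as the $m=2$ case of Theorem~\ref{thm-d}, which is derived from Wang and Ma's Theorem~\ref{thm-a} by expanding the terminating ${}_2\phi_1$, interchanging the order of summation and shifting the inner index --- a self-contained series rearrangement that works uniformly for all $m\ge 2$ and is what confirms the conjecture. You instead notice that both $L(aq,bq)$ (verbatim) and $L(a,b)$ (after the rescaling $(a,b)\mapsto(a/q,b/q)$) are instances of the Schilling--Warnaar sum \eqref{warnaar-c}, so that
\[
L(a,b)=\frac{a\,\theta(q,a)-b\,\theta(q,b)}{a-b},\qquad bL(aq,bq)=\frac{b\,\theta(q,b)-b\,\theta(q,a)}{a-b},
\]
and the theorem collapses to a telescoping of thetas; your intermediate evaluation of $L(a,b)$ is exactly the paper's identity \eqref{warnaar-d}, and the recurrence $\theta(q,x)+x\,\theta(q,xq)=1$ you invoke is the same elementary fact the paper uses repeatedly (e.g.\ in the proofs of Theorems~\ref{thm-g} and~\ref{thm-l}). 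What your argument buys is brevity and transparency: two applications of a known lemma plus bookkeeping. What it gives up is generality and logical independence: it relies on \eqref{warnaar-c} as external input (harmless, since that identity has independent proofs due to Schilling--Warnaar and Berkovich, but note that Wang and Ma derive \eqref{warnaar-c} \emph{from} Theorem~\ref{thm-b}, so your argument could not replace theirs in that direction), and it is special to $m=2$: for $m\ge 3$ the sums $P_m(aq^k,bq^k)$ are no longer instances of the Warnaar or Schilling--Warnaar sums, so this shortcut does not extend to Conjecture~\ref{conjecture}.
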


\begin{thm}\label{thm-c}
 Let $a$ and $b$ be complex numbers. Then
 \[\theta(q,a)=\frac{q}{q+b}P(a,b)+\frac{b(1+q)}{q+b}P(aq,bq)+\frac{b^2q}{q+b}P(aq^2,bq^2),\]
where the symbol on the right-hand side is
 \[P(a,b)=(q,a,b;q)_{\infty}\sum_{n=0}^{\infty}\frac{(ab/q^3;q)_{2n}}{(q,a,b,ab/q^3;q)_n}q^n.\]
\end{thm}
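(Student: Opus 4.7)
The plan is to apply Theorem \ref{thm-a} with $m=3$ together with the substitution $b\mapsto b/q^2$. This parallels the derivation of Theorem \ref{thm-b}, which arises from the $m=2$, $b\mapsto b/q$ specialization. The substitution is dictated by the Pochhammer structure of $P(a,b)$: the ratio $(ab/q^3;q)_{2n}/(ab/q^3;q)_n = (abq^{n-3};q)_n$ appearing in $P(a,b)$ is exactly what $(abq^{n-1};q)_n$ in Theorem \ref{thm-a} becomes under this reparametrization.

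First I would compute $U_3(b/q^2)$ explicitly. Since $(q^{-2};q)_k=0$ for $k\geq 3$, only the terms $k=0,1,2$ contribute, and after clearing the common denominator one finds $U_3(b/q^2) = (qb^2+(1+q)b+q)/q$, a polynomial of degree $2$ in $b$. Next I would simplify the outer Pochhammer factor by means of $(b/q^2;q)_{n+3} = (1-b/q^2)(1-b/q)(b;q)_{n+1}$, so that $(q,a,b/q^2;q)_\infty/(b/q^2;q)_{n+3} = (q,a,b;q)_\infty/[(b;q)_n(1-bq^n)]$. This matches the $(q,a,b;q)_\infty$ prefactor of the $P$-series, up to an extra $(1-bq^n)^{-1}$ in the summand. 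I would then expand the terminating $V_{3,n}(a,b/q^2) = 1+T_1+T_2+T_3$, where $T_k$ is proportional to $b^k$ and carries $\prod_{i=0}^{k-1}(1-q^{n-i})$ in its numerator and $\prod_{i=0}^{k-1}(1-abq^{n-3+i})$ in its denominator, splitting the right-hand side into four subseries.

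The heart of the argument is to recognize these four subseries as a linear combination of $P(a,b)$, $P(aq,bq)$, and $P(aq^2,bq^2)$. For each $k\geq 1$, the $(1-q^{n-i})$ factors in $T_k$ permit an index shift $n\mapsto n+k$; after the shift, $(q,a,b;q)_n$ absorbs factors $\prod_{j=0}^{k-1}(1-aq^{n+j})(1-bq^{n+j})$ matching the denominator of the summand of $P(aq^k,bq^k)$, while the $(1-abq^{n-3+i})$ factors in $T_k$ merge with $(ab/q^3;q)_{2n}$ to produce $(abq^{2k-3};q)_{2n}$, matching its numerator. The spurious $(1-bq^n)^{-1}$ factor from the prefactor is absorbed by a partial-fraction decomposition in $b$ that produces the $(q+b)^{-1}$ appearing in the statement. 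After regrouping one obtains an identity of the form $U_3(b/q^2)\,\theta(q,a) = \frac{U_3(b/q^2)}{q+b}\bigl(qP(a,b)+b(1+q)P(aq,bq)+qb^2 P(aq^2,bq^2)\bigr)$, and dividing through by the nonzero polynomial $U_3(b/q^2)$ yields Theorem \ref{thm-c}.

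The hard part will be the 4-to-3 collapse in the last step. The factor $(q+b)^{-1}$ does not arise from $U_3(b/q^2)$ itself (which does not admit $q+b$ as a factor) but must emerge from the partial-fraction expansion in $b$ of the extra $(1-bq^n)^{-1}$ across the three $P$-components. Tracking the precise powers of $q$ and $b$ through this decomposition, and verifying that the resulting coefficients match $q$, $b(1+q)$, and $qb^2$ up to the common factor $U_3(b/q^2)/(q+b)$, requires delicate bookkeeping that extends the analogous, simpler argument used for Theorem \ref{thm-b}.
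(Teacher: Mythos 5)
Your overall strategy---specialize Theorem \ref{thm-a}, expand the terminating ${}_2\phi_1$ defining $V_{m,n}$, interchange the two sums and shift the inner index---is the same one the paper uses: Theorem \ref{thm-c} is obtained there as the case $m=3$ of Theorem \ref{thm-d}, whose proof is exactly this computation. However, your specialization is off by one, and both of the difficulties you flag as ``the hard part'' are artifacts of that choice rather than genuine features of the problem. To land on a three-term combination of $P(aq^k,bq^k)$, $k=0,1,2$, you must take $m=2$ in Theorem \ref{thm-a} (the paper first replaces $m$ by $m-1$, so its $Q_3$ equals $U_2$) together with $b\mapsto b/q^2$. Then $V_{2,n}(a,b/q^2)$ has exactly three terms, one for each target series; the denominator obeys $(b/q^2;q)_{n+2}=(1-b/q^2)(1-b/q)(b;q)_n$, so no extraneous factor survives; and
\[
U_2(b)=\frac{1}{1-bq}-\frac{b^2q^2}{1-bq}=1+bq,\qquad U_2(b/q^2)=\frac{q+b}{q},
\]
which is the sole source of the denominators $q+b$ in the statement. (The same off-by-one affects your reference point: Theorem \ref{thm-b} comes from $m=1$, $b\mapsto b/q$, not from $m=2$.)

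With $m=3$ the argument as described does not close. You obtain four subseries from $V_{3,n}$ where only three targets exist, and every summand carries the index-dependent factor $(1-bq^{n})^{-1}$. A partial-fraction decomposition in $b$ cannot absorb this: $(1-bq^n)^{-1}$ has a pole at $b=q^{-n}$ that moves with the summation index, whereas $(q+b)^{-1}$ has a fixed pole at $b=-q$, so no $n$-independent rational prefactor can convert the one into the other; in the correct derivation $(q+b)^{-1}$ arises from dividing by $U_2(b/q^2)$, not from any manipulation of the summands. Moreover, the terminal identity you propose to verify is literally Theorem \ref{thm-c} multiplied through by $U_3(b/q^2)$, so the ``four-to-three collapse'' you defer is exactly as hard as the theorem itself and nothing has been reduced. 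Replacing $U_3,V_{3,n}$ by $U_2,V_{2,n}$ turns your outline into the paper's proof of Theorem \ref{thm-d} specialized to $m=3$.
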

By expressing $L(a,b)$ in terms of $\theta(q,a)$ and $\theta(q,b)$,
Wang and Ma offered the directed proof of \eqref{warnaar-c}. The
fact leads us to consider that whether $P(a,b)$ can be expressed
through partial theta functions.

Based on Theorems \ref{thm-b} and \ref{thm-c},  Wang and Ma propose
the following conjecture related to the possible generalization of
Andrews-Warnaar identities at the end of their paper.

\begin{conjecture} \label{conjecture}
 For an integer $m\geq2$ and two complex numbers $a$ and $b$, define
$P_m(a,b)$ to be
\[P_m(a,b)=(q,a,b;q)_{\infty}\sum_{n=0}^{\infty}\frac{(ab/q^m;q)_{2n}}{(q,a,b,ab/q^m;q)_n}q^n.\]
Then $\theta(q,a)$ can be expressed as a linear combination of
$\{P_m(aq^k,bq^k)\}_{k=0}^{m-1}$, namely,
 \[\theta(q,a)=\sum_{k=0}^{m-1}\lambda_k(m,b)P_m(aq^k,bq^k),\]
where the coefficients $\lambda_k(m,b)$ are independent of $a$.
\end{conjecture}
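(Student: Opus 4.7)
My plan is to derive Conjecture \ref{conjecture} directly from Theorem \ref{thm-a}, applied with $m$ replaced by $m-1$. Starting from
\[
U_{m-1}(b)\theta(q,a) = (q,a,b;q)_{\infty}\sum_{n=0}^{\infty}\frac{(abq^{n-1};q)_n\, V_{m-1,n}(a,b)}{(q,a;q)_n(b;q)_{m-1+n}}\, q^n,
\]
I would expand the terminating $_2\phi_1$ series $V_{m-1,n}(a,b)$ as a finite sum over $j \in \{0,1,\ldots,m-1\}$ and interchange the order of summation between $n$ and $j$. This immediately produces an outer index range of the correct length to match the predicted $m$-term decomposition.

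For each fixed $j$, the plan is to apply the standard identities $(q^{-n};q)_j = (-1)^j q^{\binom{j}{2}-nj}(q;q)_n/(q;q)_{n-j}$ and $(abq^{n-1};q)_n/(abq^{n-1};q)_j = (abq^{n+j-1};q)_{n-j}$, followed by the index shift $n \mapsto n+j$, reducing the inner sum to
\[
\sum_{n=0}^{\infty} \frac{(abq^{2j+n-1};q)_n\, q^n}{(q,\,aq^j,\,bq^{m-1+j};q)_n}.
\]
Observing that $(abq^{2j+n-1};q)_n = (abq^{2j-1};q)_{2n}/(abq^{2j-1};q)_n$ and $abq^{2j-1} = (aq^j)(bq^{m-1+j})/q^m$ identifies this inner sum as $P_m(aq^j, bq^{m-1+j})/(q, aq^j, bq^{m-1+j};q)_{\infty}$. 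Collecting the $j$-dependent prefactors, simplifying $(q^{1-m};q)_j$ into a $q$-binomial via the Gauss identity, and absorbing the infinite products by $(q,aq^j,bq^{m-1+j};q)_\infty (a;q)_j(b;q)_{m-1+j} = (q,a,b;q)_\infty$, I expect to obtain
\[
U_{m-1}(b)\theta(q,a) = \sum_{j=0}^{m-1} q^{j^2}\binq{m-1}{j} b^j\, P_m(aq^j, bq^{m-1+j}).
\]

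The final step is to balance the mismatched $b$-exponents: the substitution $b \mapsto b/q^{m-1}$ converts $bq^{m-1+j}$ into $bq^j$, and dividing by $U_{m-1}(b/q^{m-1})$ yields Conjecture \ref{conjecture} with the explicit coefficients
\[
\lambda_j(m,b) = \frac{q^{j(j-m+1)}\, b^j\, \binq{m-1}{j}}{U_{m-1}(b/q^{m-1})},
\]
which manifestly depend only on $m$ and $b$. As a sanity check, $m=2$ gives $U_1 \equiv 1$ with $\lambda_0=1$, $\lambda_1=b$, reproducing Theorem \ref{thm-b}; $m=3$ gives $U_2(b/q) = (q+b)/q$ and exactly the three coefficients $q/(q+b)$, $b(1+q)/(q+b)$, $b^2q/(q+b)$ of Theorem \ref{thm-c}.

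The principal obstacle is not conceptual but combinatorial: accurately tracking the numerous $q$-exponents that accumulate through the summation swap, the shift $n \mapsto n+j$, and the simplification of $(q^{1-m};q)_j$. No deeper hypergeometric machinery (well-poised $_6\phi_5$, Bailey transforms, etc.) is needed beyond Theorem \ref{thm-a} itself, so once the inner sum is recognized as a $P_m$-series the remainder is pure bookkeeping.
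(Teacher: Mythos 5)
Your proposal follows essentially the same route as the paper's proof of Theorem \ref{thm-d}: replace $m$ by $m-1$ in Theorem \ref{thm-a}, expand the terminating ${}_2\phi_1$, interchange the two sums, shift $n\mapsto n+j$, recognize each inner sum as $P_m(aq^j,bq^{m-1+j})$, and finish with the substitution $b\mapsto bq^{1-m}$; your coefficients $q^{j(j-m+1)}b^j\binq{m-1}{j}/U_{m-1}(bq^{1-m})$ agree exactly with the paper's $(q^{m-k};q)_k(bq^{k-m+1})^k/\big[(q;q)_kQ_m(bq^{1-m})\big]$ since $Q_m=U_{m-1}$. The only blemish is a typo in your $m=3$ sanity check, where $U_2(b/q)$ should read $U_2(b/q^2)$ (whose value is indeed $(q+b)/q$).
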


If the conjecture is true, it is natural for us to ponder over that
whether $P_m(a,b)$ can be expressed via partial theta functions.

The structure of the paper is arranged as follows. We shall confirm
Conjecture \ref{conjecture} in Section 2. As the applications of
this conjecture, we prove that a family of series can be expressed
by partial theta functions and construct some new partial theta
function identities in Sections 3 and 4.
\section{Confirmation of Wang and Ma's conjecture}
\begin{thm}\label{thm-d}
For an integer $m\geq2$ and two complex numbers $a$ and $b$, there
holds
 \bnm
 \theta(q,a)=\sum_{k=0}^{m-1}\frac{(q^{m-k};q)_k(bq^{k-m+1})^k}{(q;q)_kQ_m(bq^{1-m})}P_m(aq^k,bq^k),
 \enm
where $P_m(a,b)$ has been given in Conjecture \ref{conjecture} and
 \[Q_m(b)=\sum_{i=0}^{m-2}\frac{(q^{2-m};q)_i}{(q;q)_i(bq^i;q)_{m-1}}(1-bq^{2i})b^{2i}q^{(2i-2+m)i}.\]
\end{thm}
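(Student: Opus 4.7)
The plan is to deduce Theorem \ref{thm-d} directly from Theorem \ref{thm-a}. The starting observation is that $Q_m(b)=U_{m-1}(b)$: the exponent $(2i-2+m)i$ in the definition of $Q_m$ equals $2i^2+(m-2)i$, which matches $2k^2-k+(m-1)k$ in $U_{m-1}$, and the remaining factors align term by term. This identification suggests applying Theorem \ref{thm-a} with the replacements $m\mapsto m-1$ and $b\mapsto bq^{1-m}$, which yields
\[
Q_m(bq^{1-m})\,\theta(q,a)=(q,a,bq^{1-m};q)_\infty\sum_{n\ge 0}\frac{(abq^{n-m};q)_n}{(q,a;q)_n}\,\frac{V_{m-1,n}(a,bq^{1-m})}{(bq^{1-m};q)_{m-1+n}}\,q^n,
\]
where $V_{m-1,n}(a,bq^{1-m})={_2\phi_1}(q^{1-m},q^{-n};abq^{n-m};q,bq^n)$.

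Next I would expand this terminating ${_2\phi_1}$ as a finite sum over $k=0,\ldots,m-1$ and swap the two summations, placing $k$ on the outside. Setting $n=k+j$ with $j\ge 0$ and applying the standard identities
\[
(q^{-n};q)_k\,q^{nk}=(-1)^k q^{\binom{k}{2}}\frac{(q;q)_n}{(q;q)_{n-k}},\qquad \frac{(abq^{n-m};q)_n}{(abq^{n-m};q)_k}=(abq^{2k+j-m};q)_j,
\]
together with the splitting $(bq^{1-m};q)_{m-1+n}=(bq^{1-m};q)_{m-1}(b;q)_{k+j}$, collapses the inner $j$-sum to a constant times
\[
\sum_{j\ge 0}\frac{(abq^{2k+j-m};q)_j\,q^j}{(q;q)_j\,(a;q)_{k+j}\,(b;q)_{k+j}}.
\]

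The crucial step is recognizing that this $j$-sum equals $P_m(aq^k,bq^k)/(q,a,b;q)_\infty$. This follows from the factorization $(abq^{2k-m};q)_{2j}=(abq^{2k-m};q)_j(abq^{2k+j-m};q)_j$, the quotients $(aq^k;q)_j=(a;q)_{k+j}/(a;q)_k$ and $(bq^k;q)_j=(b;q)_{k+j}/(b;q)_k$, and the infinite-product identity $(q,aq^k,bq^k;q)_\infty(a,b;q)_k=(q,a,b;q)_\infty$. Combined with the relation $(bq^{1-m};q)_\infty=(bq^{1-m};q)_{m-1}(b;q)_\infty$, the global prefactor $(q,a,bq^{1-m};q)_\infty/\bigl((q,a,b;q)_\infty(bq^{1-m};q)_{m-1}\bigr)$ reduces to $1$. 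What remains is the coefficient $(q^{1-m};q)_k(-1)^k q^{\binom{k+1}{2}}b^k/(q;q)_k$, which via $(q^{1-m};q)_k=(-1)^k q^{-(m-1)k+\binom{k}{2}}(q;q)_{m-1}/(q;q)_{m-1-k}$, the identity $(q;q)_{m-1}/(q;q)_{m-1-k}=(q^{m-k};q)_k$, and $\binom{k}{2}+\binom{k+1}{2}=k^2$, collapses to exactly $(q^{m-k};q)_k(bq^{k-m+1})^k/(q;q)_k$. Dividing both sides by $Q_m(bq^{1-m})$ then gives the theorem.

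The principal obstacle is bookkeeping: tracking the $q$-shifted factorials and the $q$-powers through the substitution $n=k+j$, the interchange of summations, and the collapse of the prefactor ratio. Conceptually the work is concentrated in the recognition that the inner sum is a shifted $P_m$; once that is seen, the coefficients reduce by straightforward manipulation.
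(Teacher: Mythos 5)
Your proposal is correct and follows essentially the same route as the paper: apply Theorem \ref{thm-a} with $m\mapsto m-1$, expand the terminating ${_2\phi_1}$, interchange the summations, shift the inner index by $k$, and recognize the inner sum as $P_m(aq^k,bq^k)/(q,a,b;q)_\infty$ before simplifying the coefficient to $(q^{m-k};q)_k(bq^{k-m+1})^k/(q;q)_k$. The only (cosmetic) difference is that you perform the substitution $b\to bq^{1-m}$ at the outset, whereas the paper carries $b$ through the computation and substitutes at the very last step; your identification $Q_m(b)=U_{m-1}(b)$ and all the intermediate factorial manipulations check out.
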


\begin{proof}
Replacing $m$ by $m-1$ in Theorem \ref{thm-a}, we obtain
 \bnm
Q_m(b)\theta(q,a)&&\xqdn=(q,a,b;q)_{\infty}\\
  &&\xqdn\times\sum_{n=0}^{\infty}\frac{(abq^{n-1};q)_n}{(q,a;q)_n}\frac{q^n}{(b;q)_{m+n-1}}
 {_2\phi_1}\ffnk{cccccccccc}{q;bq^{m+n-1}}
 {q^{1-m},q^{-n}}{abq^{n-1}}.
 \enm
By means of the relation
\[(x;q)_k=(-1)^kq^{\binm{k}{2}}x^k(q^{1-k}/x;q)_k,\]
we can proceed as follows:
 \bnm
\xqdn Q_m(b)\theta(q,a)&&\xqdn=(q,a,b;q)_{\infty}\\
  &&\xqdn\times\sum_{n=0}^{\infty}\frac{(abq^{n-1};q)_n}{(q,a;q)_n}\frac{q^n}{(b;q)_{m+n-1}}
\sum_{k=0}^{m-1}\frac{(q^{1-m},q^{-n};q)_k}{(q,abq^{n-1};q)_k}(bq^{m+n-1})^k\\
&&\xqdn=(q,a,b;q)_{\infty}\\
  &&\xqdn\times\sum_{n=0}^{\infty}\frac{(abq^{n-1};q)_n}{(q,a;q)_n}\frac{q^n}{(b;q)_{m+n-1}}
\sum_{k=0}^{m-1}\frac{(q^{m-k},q^{1+n-k};q)_k}{(q,abq^{n-1};q)_k}(bq^{k-1})^k.
 \enm
Interchange the summation order to get
 \bnm
Q_m(b)\theta(q,a)&&\xqdn=(q,a,b;q)_{\infty}\\
  &&\xqdn\times\sum_{k=0}^{m-1}\frac{(q^{m-k};q)_k}{(q;q)_k}(bq^{k-1})^k
  \sum_{n=0}^{\infty}\frac{(abq^{n-1};q)_n}{(q,a;q)_n}\frac{q^n}{(b;q)_{m+n-1}}
\frac{(q^{1+n-k};q)_k}{(abq^{n-1};q)_k}\\
&&\xqdn=(q,a,bq^{m-1};q)_{\infty}\\
  &&\xqdn\times\sum_{k=0}^{m-1}\frac{(q^{m-k};q)_k}{(q;q)_k}(bq^{k-1})^k
  \sum_{n=0}^{\infty}\frac{(ab/q;q)_{2n}}{(q,a;q)_n}\frac{q^n}{(bq^{m-1};q)_{n}}
\frac{(q^{1+n-k};q)_k}{(ab/q;q)_{n+k}}\\
&&\xqdn=(q,a,bq^{m-1};q)_{\infty}\\
  &&\xqdn\times\sum_{k=0}^{m-1}\frac{(q^{m-k};q)_k}{(q;q)_k}(bq^{k-1})^k
  \sum_{n=k}^{\infty}\frac{(ab/q;q)_{2n}}{(q,a;q)_n}\frac{q^n}{(bq^{m-1};q)_{n}}
\frac{(q^{1+n-k};q)_k}{(ab/q;q)_{n+k}}.
 \enm
Shifting the summation index $n\to n+k$, the last equation can be
manipulated as
 \bnm
Q_m(b)\theta(q,a)
&&\xqdn=(q,a,bq^{m-1};q)_{\infty}\sum_{k=0}^{m-1}\frac{(q^{m-k};q)_k}{(q;q)_k}(bq^{k-1})^k
  \\&&\xqdn\times\sum_{n=0}^{\infty}\frac{(ab/q;q)_{2n+2k}}{(q,a;q)_{n+k}}\frac{q^{n+k}}{(bq^{m-1};q)_{n+k}}
\frac{(q^{1+n};q)_k}{(ab/q;q)_{n+2k}}
 \enm
 \bnm
&&\xqdn=(q,a,bq^{m-1};q)_{\infty}\sum_{k=0}^{m-1}\frac{(q^{m-k};q)_k}{(q;q)_k}(bq^{k})^k
  \\&&\xqdn\times\sum_{n=0}^{\infty}\frac{(ab/q^{1-2k};q)_{2n}}{(q;q)_n(a,bq^{m-1};q)_{n+k}}
   \frac{q^{n}}{(ab/q^{1-2k};q)_{n}}\\
&&\xqdn=\sum_{k=0}^{m-1}\frac{(q^{m-k};q)_k}{(q;q)_k}(bq^{k})^k(q,aq^k,bq^{k+m-1};q)_{\infty}
  \\&&\xqdn\times\sum_{n=0}^{\infty}\frac{(ab/q^{1-2k};q)_{2n}}{(q;q)_n(aq^k,bq^{k+m-1};q)_{n}}
   \frac{q^{n}}{(ab/q^{1-2k};q)_{n}}.
 \enm
Employ the substitution $b\to bq^{1-m}$ to gain
 \bnm
  Q_m(bq^{1-m})\theta(q,a)
&&\xqdn=\sum_{k=0}^{m-1}\frac{(q^{m-k};q)_k}{(q;q)_k}(bq^{k-m+1})^k(q,aq^k,bq^k;q)_{\infty}
  \\&&\xqdn\times\sum_{n=0}^{\infty}\frac{(ab/q^{m-2k};q)_{2n}}{(q,aq^k,bq^k,ab/q^{m-2k};q)_{n}}
   q^{n}.
 \enm
Dividing both sides by $Q_m(bq^{1-m})$, we achieve Theorem
\ref{thm-d}.
\end{proof}

It is evident that Theorem \ref{thm-d} becomes Theorem \ref{thm-b}
when $m=2$. Similarly, Theorem \ref{thm-d} reduces to Theorem
\ref{thm-c} when $m=3$. Substituting Theorem \ref{thm-d} into
\eqref{warnaar-b}, we attain the product formula.

\begin{corl}\label{corl-a}
For an integer $m\geq2$ and two complex numbers $a$ and $b$, there
holds
 \bnm
 &&\xqdn\sum_{k=0}^{m-1}\frac{(q^{m-k};q)_k}{(q;q)_k}(aq^{k-m+1})^kP_m(aq^k,bq^k)
 \sum_{k=0}^{m-1}\frac{(q^{m-k};q)_k}{(q;q)_k}(bq^{k-m+1})^kP_m(aq^k,bq^k)\\
 &&\xqdn\:\:=\,Q_m(aq^{1-m})Q_m(bq^{1-m})(q,a,b;q)_{\infty}
 \sum_{n=0}^{\infty}\frac{(ab/q;q)_{2n}}{(q,a,b,ab/q;q)_n}q^n.
 \enm
\end{corl}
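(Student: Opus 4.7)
The plan is to multiply the expansion of $\theta(q,a)$ furnished by Theorem \ref{thm-d} by a companion expansion of $\theta(q,b)$, and then equate the resulting product with the right-hand side of the Andrews-Warnaar formula \eqref{warnaar-b}.

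The key observation is that the series defining
$$P_m(a,b)=(q,a,b;q)_{\infty}\sum_{n=0}^{\infty}\frac{(ab/q^m;q)_{2n}}{(q,a,b,ab/q^m;q)_n}q^n$$
is manifestly symmetric in $a$ and $b$, so $P_m(aq^k,bq^k)=P_m(bq^k,aq^k)$. Hence Theorem \ref{thm-d} applied as stated yields
$$\theta(q,a)=\frac{1}{Q_m(bq^{1-m})}\sum_{k=0}^{m-1}\frac{(q^{m-k};q)_k(bq^{k-m+1})^k}{(q;q)_k}P_m(aq^k,bq^k),$$
while Theorem \ref{thm-d} with $a$ and $b$ interchanged (using the symmetry of $P_m$ to preserve the common factor $P_m(aq^k,bq^k)$) yields
$$\theta(q,b)=\frac{1}{Q_m(aq^{1-m})}\sum_{k=0}^{m-1}\frac{(q^{m-k};q)_k(aq^{k-m+1})^k}{(q;q)_k}P_m(aq^k,bq^k).$$

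Multiplying these two expansions, replacing $\theta(q,a)\theta(q,b)$ on the left-hand side by the right-hand side of \eqref{warnaar-b}, and finally multiplying through by $Q_m(aq^{1-m})Q_m(bq^{1-m})$ to clear denominators reproduces the asserted identity. The argument is essentially a short symmetry observation grafted onto two previously established identities, so there is no serious computational obstacle; the only point worth stressing is that the coefficients extracted from Theorem \ref{thm-d} depend on $b$ but not on $a$, which is precisely what makes the swap $a\leftrightarrow b$ produce a legitimate and distinct companion expansion of $\theta(q,b)$.
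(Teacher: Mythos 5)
Your proposal is correct and coincides with the paper's own (one-line) argument: the paper likewise obtains the corollary by substituting the expansion of Theorem \ref{thm-d} (and its $a\leftrightarrow b$ counterpart, using the symmetry of $P_m$) into the Andrews--Warnaar product formula \eqref{warnaar-b} and clearing the factors $Q_m(aq^{1-m})Q_m(bq^{1-m})$.
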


When $m=2$, Corollary \ref{corl-a} becomes the simple result:
  \bnm
  &&\big\{L(a,b)+aL(aq,bq)\big\}\big\{L(a,b)+bL(aq,bq)\big\}\\
  &&\:=(q,a,b;q)_{\infty}\sum_{n=0}^{\infty}\frac{(ab/q;q)_{2n}}{(q,a,b,ab/q;q)_n}q^n.
  \enm

\section{The applications of Wang and Ma's conjecture}
\begin{thm}\label{thm-e}
Let $m\geq2$ be an integer and $a$, $b$ be complex numbers. Then
$P_m(a,b)$ can be expressed as the linear combination of
$\{\theta(q,a/q^k)\}_{k=0}^{m-2}$ and
$\{\theta(q,b/q^k)\}_{k=0}^{m-2}$.
\end{thm}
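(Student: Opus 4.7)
The plan is to augment Theorem \ref{thm-d} with its symmetric companion and then to $q$-shift the pair to build a square linear system from which $P_m(a,b)$ can be isolated.

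First, the series defining $P_m(a,b)$ is manifestly symmetric in $a$ and $b$, so swapping $a\leftrightarrow b$ in Theorem \ref{thm-d} yields a companion identity
\[
\theta(q,b)=\sum_{k=0}^{m-1}\frac{(q^{m-k};q)_k(aq^{k-m+1})^k}{(q;q)_k\,Q_m(aq^{1-m})}\,P_m(aq^k,bq^k),
\]
since each $P_m(aq^k,bq^k)$ is invariant under the swap. Writing $\mu_k(x)$ for the coefficient with parameter $x$, the pair reads $\theta(q,a)=\sum_k\mu_k(b)P_m(aq^k,bq^k)$ and $\theta(q,b)=\sum_k\mu_k(a)P_m(aq^k,bq^k)$.

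Next, I would replace $a\mapsto aq^{-j}$ and $b\mapsto bq^{-j}$ simultaneously in both identities, for each $j=0,1,\ldots,m-2$. The summation parameter becomes $\ell=k-j$, sweeping $\{-j,\ldots,m-1-j\}$; taking the union of these index sets over $j$ yields exactly $\{-(m-2),\ldots,m-1\}$, a set of size $2(m-1)$. Thus the $m-1$ shifts generate $2(m-1)$ linear relations in the $2(m-1)$ unknowns $\{P_m(aq^\ell,bq^\ell):\ell=-(m-2),\ldots,m-1\}$, with right-hand sides lying in the $\mathbb{C}(a,b,q)$-span of $\{\theta(q,aq^{-j}),\theta(q,bq^{-j}):0\leq j\leq m-2\}$. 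Provided the coefficient matrix $M(a,b,q)$ is non-singular, Cramer's rule then delivers the $\ell=0$ coordinate $P_m(a,b)$ as an explicit linear combination of these $2(m-1)$ theta values, which is precisely Theorem \ref{thm-e}.

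The principal obstacle is establishing $\det M\neq 0$ as a rational function of $a,b,q$. One natural route is to track leading behaviour: after clearing the common denominators $Q_m(bq^{1-m-j})$ and $Q_m(aq^{1-m-j})$, the $j$-th ``$a$-row'' becomes a vector of polynomials in $b$ whose degrees sweep $\{0,1,\ldots,m-1\}$ (shifted by $j$), with the symmetric statement for the ``$b$-rows'' in the variable $a$; a careful Leibniz expansion should exhibit a unique dominant monomial in $\det M$, forcing non-vanishing. An alternative strategy is an induction on $m$, reducing $M_m$ to $M_{m-1}$ by appropriate row and column eliminations after specialising one of the variables. As a sanity check, $m=2$ gives a $2\times 2$ system with determinant $a-b$, reproducing Schilling--Warnaar identity \eqref{warnaar-c}, while $m=3$ yields a $4\times 4$ system that can be solved by hand using Theorem \ref{thm-c} together with the $j=1$ shift.
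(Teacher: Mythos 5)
Your setup is, in substance, the paper's own: the published proof of Theorem \ref{thm-e} likewise pairs Theorem \ref{thm-d} (equation \eqref{theta-a}) with its $a\leftrightarrow b$ companion \eqref{theta-b} and exploits the shifts $a\to a/q$, $b\to b/q$. The difference is purely organizational. Where you assemble all $m-1$ shifts into one $2(m-1)\times 2(m-1)$ system and invoke Cramer's rule, the paper runs a successive two-row elimination: from the pair \eqref{theta-a}, \eqref{theta-b} it forms one combination killing the top term $P_m(aq^{m-1},bq^{m-1})$ and another killing the bottom term $P_m(a,b)$, shifts the latter by $a\to a/q$, $b\to b/q$ so its index range again starts at $k=0$, and iterates; each round shortens the range by one and brings in $\theta(q,a/q^j)$, $\theta(q,b/q^j)$ with $j$ one larger, until only $P_m(a,b)$ survives. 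That is exactly Gaussian elimination on your matrix $M$, with the negative-index unknowns eliminated the moment they appear rather than carried along.

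The one genuine gap is the point you flag yourself: the non-singularity of $M$ is asserted only conditionally (``provided the coefficient matrix is non-singular''), and the two strategies offered remain sketches, so as written the argument is incomplete. The gap does close along exactly the lines you indicate. After clearing the denominators $Q_m(aq^{1-m-j})$ and $Q_m(bq^{1-m-j})$, the entry of the row coming from $\theta(q,b/q^j)$ in column $\ell$ is a monomial in $a$ of degree $\ell+j\le m-1$, with equality only at $\ell=m-1-j$, while the rows coming from $\theta(q,a/q^j)$ are free of $a$; hence the total $a$-degree $(m-1)^2$ is attained by a unique permutation (the $b$-row of shift $j$ goes to column $m-1-j$, which forces the $a$-row of shift $j$ to column $-j$), and its coefficient is a product of manifestly nonzero factors of the form $(q^{m-k};q)_k/(q;q)_k$ and powers of $q$, so $\det M\neq 0$ as a rational function of $a,b,q$. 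You should carry this out explicitly rather than leave it as ``should exhibit a unique dominant monomial.'' In fairness, the paper is no more rigorous here: it says ``continue the above process'' without checking that the elimination coefficients $\alpha$, $\beta$, $\gamma_k$ and, crucially, the final coefficient of $P_m(a,b)$ do not vanish identically; your determinant formulation actually makes that verification cleaner. Your $m=2$ and $m=3$ sanity checks are consistent with \eqref{warnaar-c} and with Theorem \ref{thm-f}.
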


\begin{proof}
Theorem \ref{thm-d} can be written as
 \bmn \label{theta-a}
 \theta(q,a)=\sum_{k=0}^{m-1}\lambda_k(m,b)P_m(aq^k,bq^k),
 \emn
where the coefficients on the right-hand side denote
\[\lambda_k(m,b)=\frac{(q^{m-k};q)_k(bq^{k-m+1})^k}{(q;q)_kQ_m(bq^{1-m})},\quad k=0,1,\cdots,m-1.\]
Interchange the parameters $a$ and $b$ to obtain
 \bmn
\label{theta-b}
 \theta(q,b)=\sum_{k=0}^{m-1}\lambda_k(m,a)P_m(aq^k,bq^k).
 \emn
The difference of
$\eqref{theta-a}\times\lambda_{m-1}(m,a)-\eqref{theta-b}\times\lambda_{m-1}(m,b)$
produces
 \bmn \label{theta-c}
&&\xqdn\lambda_{m-1}(m,a)\theta(q,a)-\lambda_{m-1}(m,b)\theta(q,b)
\nnm\\
&&\xqdn\:=\:\sum_{k=0}^{m-2}\big\{\lambda_{m-1}(m,a)\lambda_k(m,b)-\lambda_{m-1}(m,b)\lambda_k(m,a)\big\}P_m(aq^k,bq^k).
 \emn
The difference of
$\eqref{theta-a}\times\lambda_{0}(m,a)-\eqref{theta-b}\times\lambda_{0}(m,b)$
creates
 \bnm
&&\xqdn\xxqdn\lambda_{0}(m,a)\theta(q,a)-\lambda_{0}(m,b)\theta(q,b)\\
&&\xqdn\xxqdn\:=\:\sum_{k=1}^{m-1}\big\{\lambda_{0}(m,a)\lambda_k(m,b)-\lambda_{0}(m,b)\lambda_k(m,a)\big\}P_m(aq^k,bq^k).
 \enm
Perform the replacements $a\to a/q$, $b\to b/q$ in the last equation
to get
 \bmn\label{theta-d}
&&\xxqdn\lambda_{0}(m,a/q)\theta(q,a/q)-\lambda_{0}(m,b/q)\theta(q,b/q)
\nnm\\
&&\xxqdn\:=\:\sum_{k=0}^{m-2}\big\{\lambda_{0}(m,a/q)\lambda_{k+1}(m,b/q)-\lambda_{0}(m,b/q)\lambda_{k+1}(m,a/q)\big\}P_m(aq^k,bq^k).
 \emn
Define two signs by
 \bnm
&&\qqdn\xqdn\alpha(m,a,b)=\lambda_{0}(m,a/q)\lambda_{m-1}(m,b/q)-\lambda_{0}(m,b/q)\lambda_{m-1}(m,a/q),\\
&&\qqdn\xqdn\beta(m,a,b)=\lambda_{m-1}(m,a)\lambda_{m-2}(m,b)-\lambda_{m-1}(m,b)\lambda_{m-2}(m,a).
\enm
  Then the difference
$\eqref{theta-c}\times\alpha(m,a,b)-\eqref{theta-d}\times\beta(m,a,b)$
brings out
 \bnm
&&\alpha(m,a,b)\big\{\lambda_{m-1}(m,a)\theta(q,a)-\lambda_{m-1}(m,b)\theta(q,b)\big\}\\
&&\:-\:\beta(m,a,b)\big\{\lambda_{0}(m,a/q)\theta(q,a/q)-\lambda_{0}(m,b/q)\theta(q,b/q)\big\}\\
&&\:=\sum_{k=0}^{m-3}\gamma_k(m,a,b)P_m(aq^k,bq^k),
 \enm
where the coefficients on the right-hand stand for
 \bnm
&&\xxqdn\gamma_k(m,a,b)=\alpha(m,a,b)\big\{\lambda_{m-1}(m,a)\lambda_k(m,b)-\lambda_{m-1}(m,b)\lambda_k(m,a)\big\}\\
&&\qquad-\,\beta(m,a,b)\big\{\lambda_{0}(m,a/q)\lambda_{k+1}(m,b/q)-\lambda_{0}(m,b/q)\lambda_{k+1}(m,a/q)\big\},\\
&&\xxqdn k=0,1,\cdots,m-3.
 \enm
Continue the above process. Finally, only the term $P_m(a,b)$ is
holden back on the right-hand side. Thus $P_m(a,b)$ is expressed as
the linear combination of $\{\theta(q,a/q^k)\}_{k=0}^{m-2}$ and
$\{\theta(q,b/q^k)\}_{k=0}^{m-2}$.
\end{proof}

The proof of Theorem \ref{thm-e} offers us the method to express
$P_m(a,b)$ through $\{\theta(q,a/q^k)\}_{k=0}^{m-2}$ and
$\{\theta(q,b/q^k)\}_{k=0}^{m-2}$. Utilizing this method, it is easy
to derive \eqref{warnaar-c} from Theorem \ref{thm-b}. Further, we
can deduce the following partial theta function identity from
Theorem \ref{thm-c}.

\begin{thm}\label{thm-f}
 Let $a$ and $b$ be complex numbers. Then
 \bnm
U(a,b)-U(b,a)=\frac{(a-b)(a-bq)(aq-b)}{ab(a+b)(1+q)}P(a,b),
 \enm
where $P(a,b)$ has been furnished in Theorem \ref{thm-c} and the
notation $U(a,b)$ is
 \bnm
U(a,b)=\frac{a(b+q)}{b(1+q)}\theta(q,a)+\frac{b+q^2}{a+b}\theta(q,a/q).
 \enm
\end{thm}

\begin{thm}\label{thm-g}
 Let $a$ and $b$ be complex numbers. Then
 \bnm
&&\qqdn\frac{abq+aq^2-b^2-bq^3}{q(a-b)b^2}\theta(q,a)-\frac{abq+bq^2-a^2-aq^3}{q(a-b)a^2}\theta(q,b)-\frac{1+q}{ab}\\
&&\qqdn\:\:=-\frac{(a-bq)(b-aq)}{a^2b^2}P(a,b).
 \enm
\end{thm}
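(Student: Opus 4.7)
The plan is to derive Theorem \ref{thm-g} from Theorem \ref{thm-f} by using a one-step shift identity to eliminate the auxiliary theta values $\theta(q,a/q)$ and $\theta(q,b/q)$. The key input is the elementary recurrence
\[
\theta(q,y)=1-y\,\theta(q,qy),
\]
obtained by peeling off the $n=0$ term of $\theta(q,y)=\sum_{n\ge 0}(-1)^n q^{\binom{n}{2}}y^n$ and reindexing $n\to n+1$. Setting $y=a/q$ and $y=b/q$ yields
\[
\theta(q,a/q)=1-(a/q)\,\theta(q,a),\qquad \theta(q,b/q)=1-(b/q)\,\theta(q,b).
\]

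Next, I would substitute these two relations into Theorem \ref{thm-f}. This removes the two shifted thetas, produces a constant contribution
\[
\tfrac{b+q^2}{a+b}-\tfrac{a+q^2}{a+b}=\tfrac{b-a}{a+b},
\]
and modifies the coefficients of $\theta(q,a)$ and $\theta(q,b)$. A direct manipulation shows that the coefficient of $\theta(q,a)$ becomes
\[
\frac{a(b+q)}{b(1+q)}-\frac{a(b+q^2)}{q(a+b)}=\frac{a\bigl(abq+aq^2-b^2-bq^3\bigr)}{bq(1+q)(a+b)},
\]
with the coefficient of $\theta(q,b)$ being the analogous expression obtained by swapping $a\leftrightarrow b$ and flipping sign.

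Finally, I would multiply the whole identity through by the common factor $\dfrac{(1+q)(a+b)}{ab(a-b)}$. A short verification confirms that this rescaling converts the two $\theta$ coefficients into $\dfrac{abq+aq^2-b^2-bq^3}{b^2q(a-b)}$ and $-\dfrac{abq+bq^2-a^2-aq^3}{a^2q(a-b)}$, the constant into $-\dfrac{1+q}{ab}$, and the coefficient $-\dfrac{(a-b)(a-bq)(b-aq)}{ab(a+b)(1+q)}$ of $P(a,b)$ into $-\dfrac{(a-bq)(b-aq)}{a^2b^2}$, exactly as stated in Theorem \ref{thm-g}.

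The main obstacle is bookkeeping rather than substance: one must carefully expand the cross-products $aq(a+b)(b+q)$ and $ab(1+q)(b+q^2)$ to see that many monomials cancel and leave precisely the polynomial $a(abq+aq^2-b^2-bq^3)$, and then verify that the common denominators $(1+q)(a+b)$ cancel against the rescaling factor to deliver $b^2q(a-b)$ and $a^2q(a-b)$. Beyond Theorem \ref{thm-f} and the trivial recurrence above, no hypergeometric machinery is required.
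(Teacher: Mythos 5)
Your proposal is correct and follows essentially the same route as the paper: the paper likewise substitutes $\theta(q,a/q)=1-\tfrac{a}{q}\theta(q,a)$ and $\theta(q,b/q)=1-\tfrac{b}{q}\theta(q,b)$ into Theorem \ref{thm-f}, obtains the same combined coefficients $\tfrac{a(abq+aq^2-b^2-bq^3)}{qb(1+q)(a+b)}$ and $\tfrac{b(abq+bq^2-a^2-aq^3)}{qa(1+q)(a+b)}$ together with the constant $\tfrac{b-a}{a+b}$, and then rescales to reach the stated form. Your algebra (including the final multiplication by $\tfrac{(1+q)(a+b)}{ab(a-b)}$) checks out.
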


\begin{proof}
First of all, we have the relation
 \bnm
&&\frac{a(b+q)}{b(1+q)}\theta(q,a)+\frac{b+q^2}{a+b}\theta(q,a/q)\\
&&\:=\frac{a(b+q)}{b(1+q)}\theta(q,a)+\frac{b+q^2}{a+b}\bigg\{1+\sum_{n=1}^{\infty}(-1)^nq^{\binm{n}{2}}\bigg(\frac{a}{q}\bigg)^n\bigg\}\\
&&\:=\frac{a(b+q)}{b(1+q)}\theta(q,a)+\frac{b+q^2}{a+b}\bigg\{1+\sum_{n=0}^{\infty}(-1)^{n+1}q^{\binm{n+1}{2}}\bigg(\frac{a}{q}\bigg)^{n+1}\bigg\}\\
&&\:=\frac{a(b+q)}{b(1+q)}\theta(q,a)+\frac{b+q^2}{a+b}\bigg\{1-\frac{a}{q}\theta(q,a)\bigg\}\\
&&\:=\frac{a(abq+aq^2-b^2-bq^3)}{qb(1+q)(a+b)}\theta(q,a)+\frac{b+q^2}{a+b}.
 \enm
Subsequently, we gain another relation
 \bnm
&&\qdn\qqdn\xxqdn\xxqdn\frac{b(a+q)}{a(1+q)}\theta(q,b)+\frac{a+q^2}{a+b}\theta(q,b/q)\\
&&\qdn\qqdn\xxqdn\xxqdn\:=\frac{b(abq+bq^2-a^2-aq^3)}{qa(1+q)(a+b)}\theta(q,b)+\frac{a+q^2}{a+b}.
 \enm
Substituting the last two relations into Theorem \ref{thm-f}, we
achieve Theorem \ref{thm-g}.
\end{proof}

\begin{corl}[$b=\frac{a^2+aq^3}{aq+q^2}$ in Theorem \ref{thm-g}]\label{corl-b}
 \bnm
\qquad\theta(q,a)=\frac{q(a+q^3)}{a^2+aq^2+aq^3+q^4}-\frac{aq^2(1+q)(1-q)^2}{(a+q)(a^2+aq^2+aq^3+q^4)}P\bigg(a,\frac{a^2+aq^3}{aq+q^2}\bigg).
 \enm
\end{corl}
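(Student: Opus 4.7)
The plan is to specialize Theorem \ref{thm-g} at the prescribed value of $b$ and carry out the resulting simplification. First, I would observe that the coefficient of $\theta(q,b)$ in Theorem \ref{thm-g} has numerator $abq+bq^2-a^2-aq^3 = b(aq+q^2)-a(a+q^3)$, which vanishes precisely when $b = \frac{a(a+q^3)}{q(a+q)} = \frac{a^2+aq^3}{aq+q^2}$. With this choice, the $\theta(q,b)$ term disappears and Theorem \ref{thm-g} reduces to a linear equation
\[
C_a\,\theta(q,a) - \frac{1+q}{ab} = -\frac{(a-bq)(b-aq)}{a^2 b^2}\,P(a,b),\qquad C_a := \frac{abq+aq^2-b^2-bq^3}{q(a-b)b^2},
\]
which one solves for $\theta(q,a)$.

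Next, I would compute each ingredient after substituting $b = \frac{a(a+q^3)}{q(a+q)}$. Routine algebra yields
\[
a-b = \frac{a(q-1)(a-q^2)}{q(a+q)},\quad a-bq = \frac{aq(1-q)(1+q)}{a+q},\quad b-aq = \frac{a^2(1-q)(1+q)}{q(a+q)},
\]
whence
\[
\frac{(a-bq)(b-aq)}{a^2b^2} = \frac{q^2(1-q)^2(1+q)^2}{a(a+q^3)^2},\qquad \frac{1+q}{ab} = \frac{q(1+q)(a+q)}{a^2(a+q^3)}.
\]

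The computation of $C_a$ is the delicate step. After clearing denominators, its numerator becomes proportional to $aq^2\bigl[(a+q^3)(a-q^2)(a+q)+q^2(a+q)^2\bigr]-a^2(a+q^3)^2$; expanding and collecting by powers of $a$ leaves a multiple of $a^3+a^2q^3-aq^5-q^6$. The crucial factorization is
\[
a^3+a^2q^3-aq^5-q^6 = (a-q^2)\bigl(a^2+aq^2+aq^3+q^4\bigr),
\]
which both enables the cancellation of the spurious $(a-q^2)(1-q)$ already present in $q(a-b)b^2$ and produces the exact denominator appearing in the statement. This yields the clean form $C_a = \frac{(1+q)(a+q)(a^2+aq^2+aq^3+q^4)}{a^2(a+q^3)^2}$.

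Finally, dividing through gives $\theta(q,a)=\frac{(1+q)/(ab)}{C_a} - \frac{(a-bq)(b-aq)/(a^2b^2)}{C_a}P(a,b)$, and plugging in the simplified expressions reproduces the stated formula. The main obstacle is recognizing (and verifying) the factorization of $a^3+a^2q^3-aq^5-q^6$; once this is spotted, everything else is routine polynomial manipulation.
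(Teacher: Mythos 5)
Your proposal is correct and follows exactly the route the paper intends: the corollary is obtained by choosing $b=\frac{a^2+aq^3}{aq+q^2}$ so that the numerator $b(aq+q^2)-a(a+q^3)$ of the $\theta(q,b)$ coefficient in Theorem \ref{thm-g} vanishes, then solving for $\theta(q,a)$; your intermediate factorizations (including $a^3+a^2q^3-aq^5-q^6=(a-q^2)(a^2+aq^2+aq^3+q^4)$) all check out and reproduce the stated identity.
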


\begin{thm}\label{thm-h}
 Let $a$ and $b$ be complex numbers. Then
 \bnm
&&\frac{q^2}{1+q}\Omega(a/q,b/q)-\frac{ab(a+b-q-q^2)}{(1+q)(a+b)}\Omega(a,b)-\frac{a^2b^2}{q(a+b)}\Omega(aq,bq)\\
&&\:-\frac{aq^2+bq^2-ab-abq}{(1+q)(a+b)}=\frac{(a-bq)(b-aq)}{(1+q)(a+b)}P(a,b),
 \enm
where the symbol on the left-hand side denotes
\[\Omega(a,b)=\sum_{n=0}^{\infty}(-1)^nq^{\binm{n}{2}}b^n\theta(q,aq^n).\]
\end{thm}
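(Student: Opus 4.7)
My strategy is to evaluate $\Omega(a,b)$ in closed form as a combination of partial theta functions and then match the resulting expression against Theorem \ref{thm-g}.

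The first step is to prove the evaluation
\[
\Omega(a,b)=\frac{a\,\theta(q,a)-b\,\theta(q,b)}{a-b}.
\]
Two natural routes are available. One is to expand each inner factor as $\theta(q,aq^n)=\sum_{k\ge0}(-1)^kq^{\binom{k}{2}+kn}a^k$, use the identity $\binom{n}{2}+\binom{k}{2}+kn=\binom{n+k}{2}$, and collect along antidiagonals $N=n+k$ so that the inner sum becomes the geometric series $\sum_{k=0}^{N}a^{k}b^{N-k}$, which telescopes to $(a^{N+1}-b^{N+1})/(a-b)$. The other is to substitute the recurrence $\theta(q,aq^n)=1-aq^n\theta(q,aq^{n+1})$ into the defining series for $\Omega(a,b)$, shift the summation index by one, and solve the resulting linear equation for $\Omega(a,b)$.

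Next, I would apply this formula at the pairs $(a/q,b/q)$ and $(aq,bq)$ and reduce everything to $\theta(q,a)$ and $\theta(q,b)$ by means of the elementary telescopings $\theta(q,a/q)=1-(a/q)\theta(q,a)$ and $\theta(q,aq)=(1-\theta(q,a))/a$ (and their $b$-analogues, already used in the proof of Theorem \ref{thm-g}). This produces
\[
\Omega(a/q,b/q)=1-\frac{a^{2}\theta(q,a)-b^{2}\theta(q,b)}{q(a-b)},\qquad \Omega(aq,bq)=\frac{\theta(q,b)-\theta(q,a)}{a-b},
\]
so every $\Omega$-term on the left-hand side of Theorem \ref{thm-h} becomes a rational function in $a,b,q$ times $\theta(q,a)$, $\theta(q,b)$, or $1$.

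Substituting these three expressions into the left-hand side of Theorem \ref{thm-h} and collecting separately the coefficients of $\theta(q,a)$, $\theta(q,b)$, and the constant yields an expression of the form $A(a,b)\theta(q,a)+B(a,b)\theta(q,b)+C(a,b)$. Theorem \ref{thm-g} writes the target $\frac{(a-bq)(b-aq)}{(1+q)(a+b)}P(a,b)$ in exactly the same shape, so the proof is reduced to verifying three rational identities in $a,b,q$. I expect the only real obstacle to be the bookkeeping in this final step: each of the three matches collapses after clearing the common denominator $q(1+q)(a-b)(a+b)$ to a short polynomial identity, but the signs and powers of $q$ must be tracked carefully. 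No genuinely new input is needed beyond Theorem \ref{thm-g} and the closed form for $\Omega$.
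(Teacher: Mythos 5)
Your proposal is correct, and its computational core is the same as the paper's: the antidiagonal resummation $\binom{n}{2}+\binom{k}{2}+nk=\binom{n+k}{2}$ (which is precisely what identifies $\Omega(a,b)$ with the difference quotient $\frac{a\,\theta(q,a)-b\,\theta(q,b)}{a-b}$), the one-step telescoping $\theta(q,a/q)=1-(a/q)\theta(q,a)$, and an appeal to Theorem \ref{thm-f}. The organization, however, is genuinely different. The paper never evaluates $\Omega$ in closed form: it takes the left-hand side of Theorem \ref{thm-f}, splits it into two pairs of $\theta$-terms, and converts each pair into $\Omega$'s by two separate inline double-sum manipulations, so the identity is \emph{derived} rather than verified. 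You instead isolate the closed form of $\Omega$ as an explicit lemma, deduce $\Omega(a/q,b/q)=1-\frac{a^{2}\theta(q,a)-b^{2}\theta(q,b)}{q(a-b)}$ and $\Omega(aq,bq)=\frac{\theta(q,b)-\theta(q,a)}{a-b}$ from it (both of these are right), and then \emph{verify} the claimed identity against Theorem \ref{thm-g} by comparing coefficients of $\theta(q,a)$, $\theta(q,b)$ and $1$. The final bookkeeping does close: for instance, the coefficient of $\theta(q,a)$ on your left-hand side collapses to $-\frac{a^{2}(abq+aq^{2}-b^{2}-bq^{3})}{q(1+q)(a-b)(a+b)}$, which is exactly what Theorem \ref{thm-g} forces on the right, and the constant terms on both sides reduce to $\frac{ab}{a+b}$. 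What your route buys is the explicit formula $\Omega(a,b)=\frac{a\,\theta(q,a)-b\,\theta(q,b)}{a-b}$, which the paper leaves implicit and which makes transparent that Theorem \ref{thm-h} is Theorem \ref{thm-g} in disguise; what it gives up is that the paper's constructive derivation explains where the particular coefficients in Theorem \ref{thm-h} come from, whereas in your argument they must be known in advance.
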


\begin{proof}
Above all, we have the relation
 \bnm
&&\frac{a(b+q)}{b(1+q)}\theta(q,a)-\frac{b(a+q)}{a(1+q)}\theta(q,b)\\
&&\:=\sum_{n=0}^{\infty}(-1)^nq^{\binm{n}{2}}\frac{ab(a^{n+1}-b^{n+1})+q(a^{n+2}-b^{n+2})}{ab(1+q)}
\enm
\bnm
&&\:=\frac{a-b}{1+q}\sum_{n=0}^{\infty}(-1)^nq^{\binm{n}{2}}\frac{a^{n+1}-b^{n+1}}{a-b}\\
&&\:+\:\frac{q(a-b)}{ab(1+q)}\sum_{n=0}^{\infty}(-1)^nq^{\binm{n}{2}}\frac{a^{n+2}-b^{n+2}}{a-b}\\
&&\:=\frac{a-b}{1+q}\sum_{n=0}^{\infty}(-1)^nq^{\binm{n}{2}}b^n\theta(q,aq^n)\\
&&\:-\:\frac{q^2(a-b)}{ab(1+q)}\sum_{n=0}^{\infty}(-1)^nq^{\binm{n}{2}}(b/q)^n\theta(q,aq^{n-1})+\frac{q^2(a-b)}{ab(1+q)}.
 \enm
In the next place, we attain another relation
 \bnm
&&\xxqdn\frac{b+q^2}{a+b}\theta(q,a/q)-\frac{a+q^2}{a+b}\theta(q,b/q)\\
&&\xxqdn\:=\sum_{n=0}^{\infty}(-1)^nq^{\binm{n}{2}-n}\frac{ab(a^{n-1}-b^{n-1})+q^2(a^{n}-b^{n})}{a+b}\\
&&\xxqdn\:=\frac{ab(a-b)}{a+b}\sum_{n=0}^{\infty}(-1)^nq^{\binm{n}{2}-n}\frac{a^{n-1}-b^{n-1}}{a-b}\\
&&\xxqdn\:+\:\frac{q^2(a-b)}{a+b}\sum_{n=0}^{\infty}(-1)^nq^{\binm{n}{2}-n}\frac{a^{n}-b^{n}}{a-b}\\
&&\xxqdn\:=\frac{ab(a-b)}{q(a+b)}\sum_{n=0}^{\infty}(-1)^nq^{\binm{n}{2}}(bq)^n\theta(q,aq^{n+1})\\
&&\xxqdn\:-\:\frac{q(a-b)}{a+b}\sum_{n=0}^{\infty}(-1)^nq^{\binm{n}{2}}b^n\theta(q,aq^n)-\frac{a-b}{a+b}.
 \enm
Substituting the last two relations into Theorem \ref{thm-f}, we
obtain Theorem \ref{thm-h}.
\end{proof}

\begin{corl}[$a\to aq,b\to q+q^2-aq$ in Theorem \ref{thm-h}]\label{corl-c}
 \bnm
&&\qdn\qqdn\xxqdn\Omega(a,1+q-a)-a^2(1+q-a)^2\Omega(aq^2,q^2+q^3-aq^2)-(1-a)(1-a/q)\\
&&\qdn\qqdn\xxqdn\:=-(1+q)(1-a)(1-a/q)P(aq,q+q^2-aq).
 \enm
\end{corl}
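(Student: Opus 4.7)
The plan is to obtain Corollary \ref{corl-c} as a direct specialization of Theorem \ref{thm-h} via the substitutions $a\to aq$ and $b\to q+q^2-aq$. The whole proof is algebraic and the only nontrivial structural observation is that this choice of $b$ is engineered to kill the middle $\Omega$ term on the left-hand side of Theorem \ref{thm-h}.

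Concretely, setting $A=aq$ and $B=q+q^2-aq$, one first computes $A+B=q(1+q)$, so that the factor $A+B-q-q^2$ appearing in the coefficient of $\Omega(A,B)$ vanishes identically. Next, the remaining coefficients are simplified one by one: $A/q=a$ and $B/q=1+q-a$ give the first term $\tfrac{q^2}{1+q}\Omega(a,1+q-a)$; the identity $AB=aq^2(1+q-a)$ together with $A+B=q(1+q)$ yields $\tfrac{A^2B^2}{q(A+B)}=\tfrac{a^2q^2(1+q-a)^2}{1+q}$, producing the second term; and the numerator $Aq^2+Bq^2-AB-ABq$ factors as
\[
q^2(1+q)(q-a-aq+a^2)=q^2(1+q)(q-a)(1-a),
\]
which after dividing by $(1+q)(A+B)=q(1+q)^2$ collapses to $\tfrac{q(q-a)(1-a)}{1+q}$. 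On the right-hand side, $A-Bq=q(1+q)(a-q)$ and $B-Aq=q(1+q)(1-a)$, so $(A-Bq)(B-Aq)=-q^2(1+q)^2(q-a)(1-a)$ and the coefficient of $P(A,B)$ becomes $-q(q-a)(1-a)$.

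Having assembled these pieces, I would multiply the resulting identity through by $(1+q)/q^2$ to clear the prefactors. Using $(q-a)(1-a)/q=(1-a)(1-a/q)$, the free term becomes exactly $-(1-a)(1-a/q)$ and the right-hand side becomes $-(1+q)(1-a)(1-a/q)P(aq,q+q^2-aq)$, matching the statement of Corollary \ref{corl-c}.

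The entire argument is routine substitution and polynomial factorization; the only place where one must pause is the identity $q-a-aq+a^2=(q-a)(1-a)$, which is the algebraic mechanism converting the free term of Theorem \ref{thm-h} into the claimed product $(1-a)(1-a/q)$. No new machinery is required, and the cancellation $a+b-q-q^2=0$ under the chosen specialization is the design principle behind the corollary.
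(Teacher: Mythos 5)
Your proposal is correct and is exactly the paper's argument: the corollary is stated as the specialization $a\to aq$, $b\to q+q^2-aq$ of Theorem \ref{thm-h}, and you carry out precisely that substitution, including the key observation that $a+b-q-q^2$ vanishes so the middle $\Omega$ term drops out. All of your algebraic simplifications (the factorization $q-a-aq+a^2=(q-a)(1-a)$ and the normalization by $(1+q)/q^2$) check out and reproduce the stated identity.
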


\begin{thm}\label{thm-i}
 Let $a$, $b$, $c$ and $d$ be complex numbers. Then
 \bnm
&&\xqdn\sum_{k=0}^{\infty}\frac{1-abq^{2k}}{1-ab}\frac{(ab,c,d;q)_k}{(q,abq/c,abq/d;q)_k}q^{\binm{k}{2}+2k}\bigg(-\frac{ab}{cd}\bigg)^k \Theta_k(q,a,b)\\
&&\xqdn\:=\:\frac{(a-b)(a-bq)(b-aq)}{qab(a+b)(1+q)}(q,aq^2,bq^2;q)_{\infty}\sum_{n=0}^{\infty}\frac{(abq;q)_{2n}(abq/cd;q)_{n}}{(q,aq^2,bq^2,abq/c,abq/d;q)_n}q^n,
 \enm
where the sign on the left-hand side stands for
  \bnm
 \Theta_k(q,a,b)&&\xqdn\!=\frac{b(1+aq^{k+1})}{a(1+q)}\theta(q,bq^{k+2})-\frac{a(1+bq^{k+1})}{b(1+q)}\theta(q,aq^{k+2})\\
&&\xqdn\!+\:\frac{1+aq^{k}}{(a+b)q^{k+1}}\theta(q,bq^{k+1})-\frac{1+bq^{k}}{(a+b)q^{k+1}}\theta(q,aq^{k+1}).
 \enm
\end{thm}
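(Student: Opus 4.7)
The first observation is that $\Theta_k(q,a,b)$ is, up to a factor of $-1/q$, simply Theorem~\ref{thm-f} specialized at $a\mapsto aq^{k+2}$ and $b\mapsto bq^{k+2}$. Indeed, under this substitution the four $\theta$-coefficients on the left of Theorem~\ref{thm-f}, once multiplied by $-1/q$, match exactly those in the definition of $\Theta_k(q,a,b)$, and on the right-hand side a factor $q^{3(k+2)}$ collected from the cubic numerator cancels against the same $q^{3(k+2)}$ appearing in the denominator $aq^{k+2}\cdot bq^{k+2}\cdot(a+b)q^{k+2}$. This yields the clean identification
$$\Theta_k(q,a,b)=\frac{(a-b)(a-bq)(b-aq)}{qab(a+b)(1+q)}\,P(aq^{k+2},bq^{k+2}).$$

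Plugging this into the left-hand side of Theorem~\ref{thm-i} pulls $(a-b)(a-bq)(b-aq)/[qab(a+b)(1+q)]$ out in front, so it suffices to prove
$$\sum_{k=0}^{\infty}\frac{1-abq^{2k}}{1-ab}\frac{(ab,c,d;q)_k}{(q,abq/c,abq/d;q)_k}q^{\binom{k}{2}+2k}\Big(\!-\!\frac{ab}{cd}\Big)^k P(aq^{k+2},bq^{k+2})=(q,aq^2,bq^2;q)_\infty\sum_{n=0}^{\infty}\frac{(abq;q)_{2n}(abq/cd;q)_n}{(q,aq^2,bq^2,abq/c,abq/d;q)_n}q^n.$$
To attack this, I would expand $P(aq^{k+2},bq^{k+2})$ by its series definition, use $(aq^{k+2};q)_\infty=(aq^2;q)_\infty/(aq^2;q)_k$ to pull $(q,aq^2,bq^2;q)_\infty$ outside, and combine shifted factorials via $(aq^{k+2};q)_n(aq^2;q)_k=(aq^2;q)_{k+n}$ (and its $b$-analogue), producing a transparent double series in $(k,n)$.

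Now re-index by $j=k+n$ and swap the summation order. Using the standard identities $(q;q)_{j-k}^{-1}=(-1)^kq^{jk-\binom{k}{2}}(q^{-j};q)_k/(q;q)_j$ together with $(abq^{2k+1};q)_{2(j-k)}/(abq^{2k+1};q)_{j-k}=(abq;q)_{2j}/[(abq;q)_j(abq^{j+1};q)_k]$, all $k$-dependence in the inner sum consolidates into a terminating very-well-poised ${_6\phi_5}$ of base $ab$, top parameters $c,d,q^{-j}$, and argument $abq^{j+1}/cd$. By the summation \eqref{phi-65} this inner sum evaluates to $(abq,abq/cd;q)_j/(abq/c,abq/d;q)_j$, and the surviving single sum over $j$ coincides verbatim with the right-hand side displayed above. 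The main technical obstacle is precisely the bookkeeping of this final step: the powers of $q$, the $(-1)^k$ signs, and the telescoping of $(abq^{2k+1};q)$-factors must combine exactly so that the inner sum assumes the well-poised shape required by \eqref{phi-65}; once that rearrangement is carried out cleanly, the $_6\phi_5$ evaluation closes the proof.
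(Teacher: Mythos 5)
Your proposal is correct and follows essentially the same route as the paper: the author likewise proves the weighted sum of $P(aq^{k+2},bq^{k+2})$ equals the stated right-hand side by expanding $P$, re-indexing, and evaluating the resulting terminating very-well-poised ${_6\phi_5}$ via \eqref{phi-65}, then combines this with Theorem \ref{thm-f} exactly as you identify $\Theta_k(q,a,b)$ with $\frac{(a-b)(a-bq)(b-aq)}{qab(a+b)(1+q)}P(aq^{k+2},bq^{k+2})$. All of your intermediate factorial manipulations check out.
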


\begin{proof}
According to the definition of $P(a,b)$, we get
 \bnm
&&\sum_{k=0}^{\infty}\frac{1-abq^{2k}}{1-ab}\frac{(ab,c,d;q)_k}{(q,abq/c,abq/d;q)_k}q^{\binm{k}{2}+2k}\bigg(-\frac{ab}{cd}\bigg)^kP(aq^{k+2},bq^{k+2})\\
&&\:=\:\sum_{k=0}^{\infty}\frac{1-abq^{2k}}{1-ab}\frac{(ab,c,d;q)_k}{(q,abq/c,abq/d;q)_k}q^{\binm{k+1}{2}}\bigg(-\frac{ab}{cd}\bigg)^k\\
&&\:\times\:\:(q,aq^{k+2},bq^{k+2};q)_{\infty}\sum_{n=0}^{\infty}\frac{(abq^{2k+1};q)_{2n}}{(q,aq^{k+2},bq^{k+2},abq^{2k+1};q)_n}q^{n+k}\\
&&\:=\:\sum_{k=0}^{\infty}\frac{1-abq^{2k}}{1-ab}\frac{(ab,c,d;q)_k}{(q,abq/c,abq/d;q)_k}q^{\binm{k+1}{2}}\bigg(-\frac{ab}{cd}\bigg)^k\\
&&\:\times\:\:(q,aq^{k+2},bq^{k+2};q)_{\infty}\sum_{n=k}^{\infty}\frac{(abq^{2k+1};q)_{2n-2k}}{(q,aq^{k+2},bq^{k+2},abq^{2k+1};q)_{n-k}}q^{n}\\
&&\:=\:\sum_{k=0}^{\infty}\frac{1-abq^{2k}}{1-ab}\frac{(ab,c,d;q)_k}{(q,abq/c,abq/d;q)_k}q^{\binm{k+1}{2}}\bigg(-\frac{ab}{cd}\bigg)^k\\
&&\:\times\:\:(q,aq^{2},bq^{2};q)_{\infty}\sum_{n=k}^{\infty}\frac{q^n}{(q;q)_{n-k}(abq;q)_{n+k}}\frac{(abq;q)_{2n}}{(aq^2,bq^2;q)_{n}}\\
&&\:=\:(q,aq^2,bq^2;q)_{\infty}\sum_{n=0}^{\infty}\frac{(abq;q)_{2n}q^n}{(q,aq^2,bq^2,abq;q)_n}\\
&&\:\times\:\:{_6\phi_5}\ffnk{cccccccccc}{q;\frac{abq^{n+1}}{cd}}
 {ab,q(ab)^{\frac{1}{2}},-q(ab)^{\frac{1}{2}},c,d,q^{-n}}{(ab)^{\frac{1}{2}},-(ab)^{\frac{1}{2}},abq/c,abq/d,abq^{n+1}}.
 \enm
Calculate the series on the right-hand side by \eqref{phi-65} to
gain
 \bnm
&&\sum_{k=0}^{\infty}\frac{1-abq^{2k}}{1-ab}\frac{(ab,c,d;q)_k}{(q,abq/c,abq/d;q)_k}q^{\binm{k}{2}+2k}\bigg(-\frac{ab}{cd}\bigg)^kP(aq^{k+2},bq^{k+2})\\
&&\:=\:(q,aq^2,bq^2;q)_{\infty}\sum_{n=0}^{\infty}\frac{(abq;q)_{2n}(abq/cd;q)_{n}}{(q,aq^2,bq^2,abq/c,abq/d;q)_n}q^n.
 \enm
Combining Theorem \ref{thm-f} with the last equation, we achieve
Theorem \ref{thm-i}.
\end{proof}

\begin{corl}[$cd=qab$ in Theorem \ref{thm-i}]\label{corl-d}
  \bnm
&&\xqdn\sum_{k=0}^{\infty}(-1)^k\frac{1-abq^{2k}}{1-ab}\frac{(ab;q)_k}{(q;q)_k}q^{\binm{k+1}{2}}\Theta_k(q,a,b)\\
&&\xqdn\:=\,\frac{(a-b)(a-bq)(b-aq)}{qab(a+b)(1+q)}(q,aq^2,bq^2;q)_{\infty}.
 \enm
\end{corl}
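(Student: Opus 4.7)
The plan is to derive Corollary~\ref{corl-d} as the immediate specialization $cd=qab$ of Theorem~\ref{thm-i}. No new machinery is required; the task reduces to verifying that both sides of Theorem~\ref{thm-i} collapse cleanly under this identification.

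First I would simplify the left-hand side. Under $cd=qab$ one has $abq/c=d$ and $abq/d=c$, so the quotient of $q$-shifted factorials telescopes,
\[\frac{(c,d;q)_k}{(abq/c,abq/d;q)_k}=\frac{(c,d;q)_k}{(d,c;q)_k}=1.\]
The factor $(-ab/cd)^k$ becomes $(-1/q)^k$, which combines with $q^{\binom{k}{2}+2k}$ to give $(-1)^k q^{\binom{k}{2}+k}=(-1)^k q^{\binom{k+1}{2}}$. After these cancellations the left-hand side of Theorem~\ref{thm-i} coincides precisely with the sum displayed on the left of Corollary~\ref{corl-d}.

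Next I would simplify the right-hand side. The crucial observation is that $abq/cd=1$, so $(abq/cd;q)_n=(1;q)_n$. Since $(1;q)_0=1$ while $(1;q)_n=0$ for every $n\geq 1$, the inner $q$-series terminates after its first term, and the $n=0$ contribution equals $1$. What remains is exactly the prefactor
\[\frac{(a-b)(a-bq)(b-aq)}{qab(a+b)(1+q)}(q,aq^2,bq^2;q)_{\infty},\]
matching the right-hand side of Corollary~\ref{corl-d}.

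There is no substantive obstacle in this derivation; the only bookkeeping points are confirming the exponent identity $\binom{k}{2}+k=\binom{k+1}{2}$ and checking that, for generic $a,b,c,d$ with $cd=qab$, the remaining denominator factors $(q,aq^2,bq^2,abq/c,abq/d;q)_n$ are nonvanishing, so that the collapse of $(1;q)_n$ is a genuine termination and not an indeterminate form. Both checks are routine, and the corollary follows at once.
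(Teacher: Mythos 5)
Your proposal is correct and is exactly the paper's intended derivation: the corollary is simply the specialization $cd=qab$ of Theorem \ref{thm-i}, under which $abq/c=d$ and $abq/d=c$ make the factorial quotient equal to $1$, the power combines to $(-1)^kq^{\binom{k+1}{2}}$, and $(abq/cd;q)_n=(1;q)_n$ kills all terms with $n\geq1$ on the right. The paper offers no further argument, so your verification matches its approach in full.
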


Employing the substitution $c\to abq/c$ in Theorem \ref{thm-i} and
then letting $b\to 0$, we attain the following formula after the
replacements $a\to a/q^2$ and $d\to b$.

\begin{prop}\label{prop-a}
 Let $a$ and $b$ be complex numbers. Then
 \bnm
 \qquad \sum_{k=0}^{\infty}\frac{(b;q)_k}{(q,c;q)_k}q^{\binm{k+1}{2}}\bigg(-\frac{c}{b}\bigg)^k\theta(q,aq^k)
=(q,a;q)_{\infty}\sum_{n=0}^{\infty}\frac{(c/b;q)_{n}}{(q,a,c;q)_n}q^n.
 \enm
\end{prop}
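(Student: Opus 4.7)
The plan is to follow the author's hint and descend from Theorem~\ref{thm-i} by one substitution, one limit, and a pair of harmless relabelings. First I would substitute $c\to abq/c$ throughout Theorem~\ref{thm-i}: this interchanges $(c;q)_k$ with $(abq/c;q)_k$, converts the power-series variable $-ab/(cd)$ into $-c/(qd)$, and on the right-hand side turns $(abq/cd;q)_n$ into $(c/d;q)_n$. Neither $\Theta_k(q,a,b)$ nor the prefactor $(a-b)(a-bq)(b-aq)/[qab(a+b)(1+q)]$ is touched.

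Next I would multiply both sides by $b$ and send $b\to 0$. The $b$-dependent $q$-shifted factorials trivialize: $(ab;q)_k$, $(abq/c;q)_k$, $(abq/d;q)_k$ and $(abq;q)_{2n}$ all tend to $1$, as do $(bq^2;q)_\infty$, $(bq^2;q)_n$ and $(abq/d;q)_n$, while $(1-abq^{2k})/(1-ab)\to 1$. The one subtle point is $\lim_{b\to 0} b\,\Theta_k(q,a,b)$. Of the four summands defining $\Theta_k$, three (the ones containing $\theta(q,bq^{k+2})$, $\theta(q,bq^{k+1})$, and $\theta(q,aq^{k+1})$) carry an explicit $b$ factor in their coefficients, so after multiplication by $b$ they vanish in the limit. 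The remaining summand $-\frac{a(1+bq^{k+1})}{b(1+q)}\theta(q,aq^{k+2})$ has a genuine $1/b$ pole and contributes
\[
\lim_{b\to 0} b\,\Theta_k(q,a,b)=-\frac{a}{1+q}\theta(q,aq^{k+2}).
\]
On the other side, $b\cdot\frac{(a-b)(a-bq)(b-aq)}{qab(a+b)(1+q)}\to -\frac{a}{1+q}$, so the same scalar $-a/(1+q)$ appears and cancels.

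Dividing out this common factor and using $q^{\binm{k}{2}+2k}(-c/(qd))^k=q^{\binm{k+1}{2}}(-c/d)^k$ yields the intermediate identity
\[
\sum_{k=0}^{\infty}\frac{(d;q)_k}{(q,c;q)_k}q^{\binm{k+1}{2}}\Bigl(-\frac{c}{d}\Bigr)^k\theta(q,aq^{k+2})
=(q,aq^2;q)_{\infty}\sum_{n=0}^{\infty}\frac{(c/d;q)_n}{(q,aq^2,c;q)_n}q^n.
\]
The replacement $a\to a/q^2$ then shifts $aq^{k+2}\mapsto aq^k$ and $aq^2\mapsto a$, and the final renaming $d\to b$ delivers Proposition~\ref{prop-a} exactly. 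The only step that requires real care is the $b\to 0$ limit: one must verify summand by summand in $\Theta_k$ that only the $\theta(q,aq^{k+2})$ piece survives the competition with the $1/b$ pole in the prefactor. Once this bookkeeping is done, everything else is algebraic simplification.
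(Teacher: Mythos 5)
Your proposal is correct and is exactly the derivation the paper intends: the author states Proposition~\ref{prop-a} as the result of the substitution $c\to abq/c$ in Theorem~\ref{thm-i}, the limit $b\to 0$, and the relabelings $a\to a/q^2$, $d\to b$, and you have simply filled in the bookkeeping (in particular the correct evaluation $\lim_{b\to 0}b\,\Theta_k(q,a,b)=-\tfrac{a}{1+q}\theta(q,aq^{k+2})$ matching the limit of $b$ times the prefactor). No gaps.
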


\begin{corl}[$c\to \infty$ in Proposition \ref{prop-a}]\label{corl-e}
  \bnm
 \qquad \sum_{k=0}^{\infty}\frac{(b;q)_k}{(q;q)_k}\bigg(\frac{q}{b}\bigg)^k\theta(q,aq^k)
=(q,a;q)_{\infty}\sum_{n=0}^{\infty}\frac{1}{(q,a;q)_n}\bigg(\frac{q}{b}\bigg)^n.
 \enm
\end{corl}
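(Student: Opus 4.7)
The plan is to derive Corollary \ref{corl-e} directly from Proposition \ref{prop-a} by letting $c\to\infty$, so the only real work is to compute the two limits carefully and to justify interchanging the limit with the infinite sums.

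First I would identify the $c$-dependence on each side of Proposition \ref{prop-a}. On the left, the ratio $q^{\binom{k+1}{2}}(-c/b)^k/(c;q)_k$ controls the $c$-behaviour inside the sum. Using the standard asymptotic
\[
(c;q)_k = \prod_{i=0}^{k-1}(1-cq^i)\sim (-c)^kq^{\binom{k}{2}}\qquad(c\to\infty),
\]
this ratio tends to $q^{\binom{k+1}{2}-\binom{k}{2}}/b^k = (q/b)^k$. Combined with the remaining $c$-free factor $(b;q)_k/(q;q)_k\,\theta(q,aq^k)$, this produces exactly the left-hand side of Corollary \ref{corl-e}.

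Next I would treat the right-hand side. Only the ratio $(c/b;q)_n/(c;q)_n$ involves $c$. The same Pochhammer asymptotic gives
\[
\frac{(c/b;q)_n}{(c;q)_n}\sim\frac{(-c/b)^nq^{\binom{n}{2}}}{(-c)^nq^{\binom{n}{2}}}=\frac{1}{b^n},
\]
so the summand becomes $(q/b)^n/(q,a;q)_n$, matching the right-hand side of Corollary \ref{corl-e}. The outer prefactor $(q,a;q)_\infty$ is $c$-independent and carries through unchanged.

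The main (and essentially only) obstacle is the legitimacy of passing the limit $c\to\infty$ under the two infinite sums. I would handle this by a uniform-in-$c$ domination argument: for $|c|$ large, one has bounds of the form $|(c;q)_k|\geq C|c|^k q^{\binom{k}{2}}/2$ and $|(c/b;q)_n|\leq 2|c/b|^nq^{\binom{n}{2}}$ for all $k,n$ beyond some threshold, so each summand is dominated by a constant multiple of $|q/b|^k |\theta(q,aq^k)|/|(q;q)_k|$ on the left and $|q/b|^n/|(q,a;q)_n|$ on the right. Both dominating series converge absolutely (for $|b|>|q|$, with the identity extending by analytic continuation), and the tail estimate $|\theta(q,aq^k)|=O(1)$ keeps the left-hand sum under control. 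With dominated convergence in place, the termwise limits computed above yield precisely the claimed identity.
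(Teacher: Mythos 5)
Your proposal is correct and is exactly the paper's (implicit) proof: the corollary is stated as the $c\to\infty$ limit of Proposition \ref{prop-a}, and your termwise limits $q^{\binom{k+1}{2}}(-c/b)^k/(c;q)_k\to(q/b)^k$ and $(c/b;q)_n/(c;q)_n\to b^{-n}$ are the standard computation the author intends. The added domination argument justifying the interchange of limit and summation is a careful touch the paper omits, but it does not change the route.
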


The case $b\to0$ of Corollary \ref{corl-d} and the case $b\to\infty$
of Corollary \ref{corl-e} are the same. It can be stated as
 \bnm
\quad\sum_{k=0}^{\infty}(-1)^k\frac{q^{\binm{k+1}{2}}}{(q;q)_k}\theta(q,aq^k)=(q,a;q)_{\infty}.
 \enm
\section{More partial theta function identities}

\begin{thm}\label{thm-k}
 Let $a$ and $b$ be complex numbers. Then
 \bnm
&&\xxqdn\frac{a^2+2aq+aq^2-bq^2+ab}{b(a-bq)(b-aq)}\theta(q,a)-\frac{(a+q)(a+b)}{a(a-bq)(b-aq)}\theta(q,b)\\
&&\xxqdn\:+\:\frac{(1+q)(b+q^2)}{b(a-bq)(b-aq)}\theta(q,a/q)-\frac{a(1+q)(b+q^2)+q(a^2-b^2)}{ab(a-bq)(b-aq)}\theta(q,b/q)\\
&&\xxqdn\:=-\frac{a-b}{qab}S(a,b),
 \enm
where the notation on the right-hand side is
 \[S(a,b)=(q,a,b;q)_{\infty}\sum_{n=0}^{\infty}\frac{(ab/q^3;q)_{2n}}{(q,a,b,ab/q^3;q)_n}q^{2n}.\]
\end{thm}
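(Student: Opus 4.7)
My plan is to prove Theorem~\ref{thm-k} in two steps, in analogy with the strategy that derives Theorem~\ref{thm-g} from Theorem~\ref{thm-f}.

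First, I would reduce Theorem~\ref{thm-k} to an equivalent three-term statement. Applying the reindexing identities
\[
\theta(q,a/q) = 1 - \tfrac{a}{q}\theta(q,a),\qquad \theta(q,b/q) = 1 - \tfrac{b}{q}\theta(q,b),
\]
to the four-theta left-hand side and collecting, the $\theta(q,a/q)$ and $\theta(q,b/q)$ contributions fuse into the $\theta(q,a)$ and $\theta(q,b)$ terms plus a constant. A direct computation of the resulting coefficients (most cleanly, the constant term simplifies to $\gamma+\delta=-q(a-b)(a+b)/[ab(a-bq)(b-aq)]$) shows that Theorem~\ref{thm-k} is equivalent to an identity of the form
\[
S(a,b) = E_1(a,b)\,\theta(q,a) + E_2(a,b)\,\theta(q,b) + E_3(a,b),
\]
where $E_1,E_2,E_3$ are explicit rational functions of $a,b,q$.

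Second, I would establish this three-term decomposition by mimicking the $_6\phi_5$-based mechanism used in the proof of Theorem~\ref{thm-i}. Concretely, I would consider a weighted series
\[
\sum_{k=0}^{\infty}\rho_k\,P(aq^{c+k},bq^{c+k})
\]
for a suitable family of weights $\rho_k$ and integer shift $c$. Expanding each $P$ via its series definition, interchanging the order of summation, and applying the terminating $_6\phi_5$ summation~\eqref{phi-65} to close the inner sum, the result should reduce to $S(a,b)$ (up to a nonzero prefactor) on one side. On the other side, substituting the four-theta expression furnished by Theorem~\ref{thm-f} at the shifted arguments $(aq^{c+k},bq^{c+k})$ for each $P$ and collecting, the chosen $\rho_k$ should force all theta functions $\theta(q,aq^{j})$ and $\theta(q,bq^{j})$ with $j\ne 0,-1$ to telescope out; the surviving terms, after applying the reindexing identities above once more, yield the three-term form.

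The main obstacle is identifying the correct weights $\rho_k$ and offset $c$ that produce the $q^{2n}$-series of $S(a,b)$ via~\eqref{phi-65}: since $q^{2n}=q^n\cdot q^n$, one factor of $q^n$ must be absorbed inside the $_6\phi_5$ closure while the other survives as the geometric term of the outer series. I would pin down the right family by matching leading coefficients in $a$ and $b$ on both sides and by examining degenerate specializations such as $a=b$ (where both sides of Theorem~\ref{thm-k} vanish thanks to $\alpha|_{b=a}=-\beta|_{b=a}$ and $\gamma|_{b=a}=-\delta|_{b=a}$) or $b\to 0$ (where $S(a,0)$ collapses to $(q,a;q)_{\infty}$ and the identity degenerates to a simple relation among $\theta(q,a)$ and $\theta(q,a/q)$).
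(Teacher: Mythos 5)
Your first step (using $\theta(q,a/q)=1-\tfrac aq\,\theta(q,a)$ and $\theta(q,b/q)=1-\tfrac bq\,\theta(q,b)$ to fuse the four-theta left-hand side into a three-term statement) is sound; it is exactly the reduction the paper itself performs, in the opposite direction, to pass from Theorem~\ref{thm-k} to Theorem~\ref{thm-l}, and your computed constant $-q(a^2-b^2)/[ab(a-bq)(b-aq)]$ is correct. The problem is that this reduction does not prove anything: the three-term identity is equivalent to, not easier than, Theorem~\ref{thm-k}, and your second step, which is supposed to establish it, is not a proof but a search plan. You never identify the weights $\rho_k$ or the offset $c$, and you say yourself that finding them is ``the main obstacle.'' Worse, the mechanism you are counting on is contradicted by the paper's own worked example: in Theorem~\ref{thm-i} the $_6\phi_5$-weighted sum $\sum_k\rho_k P(aq^{k+2},bq^{k+2})$ does \emph{not} telescope down to finitely many theta functions --- the left-hand side there retains $\theta(q,aq^{k+1})$, $\theta(q,aq^{k+2})$, $\theta(q,bq^{k+1})$, $\theta(q,bq^{k+2})$ for every $k$. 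There is no reason to expect a choice of $\rho_k$ that both closes the inner sum via \eqref{phi-65} into the $q^{2n}$-series of $S(a,b)$ and simultaneously kills all but four thetas, and matching leading coefficients or checking $a=b$, $b\to0$ would only validate a candidate, not produce one.

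The step you are missing is elementary and bypasses the $_6\phi_5$ machinery entirely. Write $q^{2n}=q^n\cdot\frac{q}{b}\bigl\{1-(1-bq^{n-1})\bigr\}$ inside the defining series of $S(a,b)$; the factor $1-bq^{n-1}$ converts $(b;q)_n$ into $(b/q;q)_{n+1}$, and the two resulting sums are recognized as
\[
S(a,b)=\frac{q}{b}\,P(a,b)-\frac{q}{b}\,L(a,b/q).
\]
Now $P(a,b)$ is already expressed through $\theta(q,a),\theta(q,b),\theta(q,a/q),\theta(q,b/q)$ by Theorem~\ref{thm-f}, and $L(a,b/q)$ through $\theta(q,a)$ and $\theta(q,b/q)$ by \eqref{warnaar-c} (equivalently $L(a,b)=\frac{a}{a-b}\theta(q,a)-\frac{b}{a-b}\theta(q,b)$ after the shift $a\to a/q$, $b\to b/q$). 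Substituting and collecting coefficients gives Theorem~\ref{thm-k} directly. Without this decomposition, or a concrete replacement for it, your argument does not go through.
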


\begin{proof}
Performing the replacements $a\to a/q, b\to b/q$ in
\eqref{warnaar-c}, we obtain the following result (cf.
\citu{ma-b}{Corollary 3.1}):
 \bmn\label{warnaar-d}
\qdn
L(a,b)&&\xqdn\!=-\frac{q}{a-b}\theta(q,a/q)+\frac{q}{a-b}\theta(q,b/q)
 \nnm\\\nnm
 &&\xqdn\!=-\frac{q}{a-b}\bigg\{1-\frac{a}{q}\theta(q,a)\bigg\}+\frac{q}{a-b}\bigg\{1-\frac{b}{q}\theta(q,b)\bigg\}\\
 &&\xqdn\!=\frac{a}{a-b}\theta(q,a)-\frac{b}{a-b}\theta(q,b).
  \emn
It is interesting to see the relation
 \bnm
S(a,b)&&\xqdn\!=(q,a,b;q)_{\infty}\sum_{n=0}^{\infty}\frac{(ab/q^3;q)_{2n}}{(q,a,b,ab/q^3;q)_n}q^{n}\\
 &&\xqdn\!\times\:\bigg\{\frac{q}{b}-\frac{q}{b}(1-bq^{n-1})\bigg\}\\
&&\xqdn\!=\frac{q}{b}(q,a,b;q)_{\infty}\sum_{n=0}^{\infty}\frac{(ab/q^3;q)_{2n}}{(q,a,b,ab/q^3;q)_n}q^{n}\\
&&\xqdn\!-\:\frac{q}{b}(q,a,b;q)_{\infty}\sum_{n=0}^{\infty}\frac{(ab/q^3;q)_{2n}}{(q,a,b,ab/q^3;q)_n}(1-bq^{n-1})q^{n}\\
&&\xqdn\!=\frac{q}{b}P(a,b)-\frac{q}{b}L(a,b/q).
 \enm
Substituting Theorem \ref{thm-f} and \eqref{warnaar-d} into the last
equation, we get Theorem \ref{thm-k}.
\end{proof}

\begin{thm}\label{thm-l}
 Let $a$ and $b$ be complex numbers. Then
 \bnm
&&\:\xxqdn\frac{ab-a^2q-aq^2+bq^3}{b(a-bq)(b-aq)}\theta(q,a)-\frac{ab-b^2q-bq^2+aq^3}{a(a-bq)(b-aq)}\theta(q,b)\\
&&\:\xxqdn\:+\:\frac{q^2(a^2-b^2)}{ab(a-bq)(b-aq)}
=\frac{a-b}{ab}S(a,b).
 \enm
\end{thm}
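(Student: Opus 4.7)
My plan is to derive Theorem \ref{thm-l} directly from Theorem \ref{thm-k} by eliminating the shifted partial theta values $\theta(q,a/q)$ and $\theta(q,b/q)$ that appear on its left-hand side. The only non-trivial ingredient is the elementary recurrence
\[
\theta(q,x/q) = 1 - \frac{x}{q}\theta(q,x),
\]
which follows from the definition $\theta(q,x)=\sum_{n\geq0}(-1)^n q^{\binom{n}{2}} x^n$ by splitting off the $n=0$ term, reindexing $n=m+1$, and using $\binom{m+1}{2}-(m+1)=\binom{m}{2}-1$. This relation is already exploited in the proof of Theorem \ref{thm-h}, so it costs nothing to invoke.

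Concretely, I would first multiply Theorem \ref{thm-k} through by $-q$, so that its right-hand side becomes exactly $\frac{a-b}{ab}S(a,b)$, matching the right-hand side of Theorem \ref{thm-l}. I then substitute $\theta(q,a/q)=1-(a/q)\theta(q,a)$ and $\theta(q,b/q)=1-(b/q)\theta(q,b)$ into the rescaled left-hand side, so that the four shifted theta terms disappear in favour of $\theta(q,a)$, $\theta(q,b)$ and a pure constant. No $S$-term is introduced, and the desired identity reduces to a routine check of three rational coefficients.

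For $\theta(q,a)$, the combined numerator over the common denominator $b(a-bq)(b-aq)$ works out to
\[
a(1+q)(b+q^2) - q\bigl(a^2+2aq+aq^2-bq^2+ab\bigr) = ab-a^2q-aq^2+bq^3,
\]
matching the statement. For $\theta(q,b)$ one obtains, over $a(a-bq)(b-aq)$, the numerator
\[
q(a+q)(a+b)-a(1+q)(b+q^2)-q(a^2-b^2) = -(ab-b^2q-bq^2+aq^3),
\]
again as required. Finally, the two constant contributions from the ``$1$''-parts of $\theta(q,a/q)$ and $\theta(q,b/q)$ combine to leave exactly $\frac{q^2(a^2-b^2)}{ab(a-bq)(b-aq)}$.

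There is no conceptual obstacle: the argument is pure rational-function manipulation, and Theorem \ref{thm-k} already carries all of the $q$-series content. The only point requiring care is sign bookkeeping: after multiplication by $-q$, each of the four theta coefficients flips sign, and when $\theta(q,a/q)$ is replaced by $1-(a/q)\theta(q,a)$ (and analogously for $b$), both the coefficient of $\theta(q,a)$ and the free constant receive contributions, which one must track accurately to see the telescoping down to the compact expressions in the statement.
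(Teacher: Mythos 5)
Your proposal is correct and follows essentially the same route as the paper: both deduce Theorem \ref{thm-l} from Theorem \ref{thm-k} by substituting $\theta(q,x/q)=1-\tfrac{x}{q}\theta(q,x)$ for $x=a,b$ and simplifying the resulting rational coefficients (the paper just performs the $-q$ rescaling at the end rather than at the start). All three of your coefficient computations check out.
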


\begin{proof}
First of all, we have the relation
 \bnm
&&\xqdn\frac{a^2+2aq+aq^2-bq^2+ab}{b(a-bq)(b-aq)}\theta(q,a)+\frac{(1+q)(b+q^2)}{b(a-bq)(b-aq)}\theta(q,a/q)\\
&&\xqdn\:=\frac{a^2+2aq+aq^2-bq^2+ab}{b(a-bq)(b-aq)}\theta(q,a)+\frac{(1+q)(b+q^2)}{b(a-bq)(b-aq)}\bigg\{1-\frac{a}{q}\theta(q,a)\bigg\}\\
&&\xqdn\:=\frac{a^2q+aq^2-bq^3-ab}{qb(a-bq)(b-aq)}\theta(q,a)+\frac{(1+q)(b+q^2)}{b(a-bq)(b-aq)}.
 \enm
Subsequently, we gain another relation
 \bnm
&&\qdn\frac{(a+q)(a+b)}{a(a-bq)(b-aq)}\theta(q,b)+\frac{a(1+q)(b+q^2)+q(a^2-b^2)}{ab(a-bq)(b-aq)}\theta(q,b/q)\\
&&\qdn\:=\frac{(a+q)(a+b)}{a(a-bq)(b-aq)}\theta(q,b)+\frac{a(1+q)(b+q^2)+q(a^2-b^2)}{ab(a-bq)(b-aq)}\bigg\{1-\frac{b}{q}\theta(q,b)\bigg\}\\
&&\qdn\:=\frac{b^2q+bq^2-aq^3-ab}{qa(a-bq)(b-aq)}\theta(q,b)+\frac{a(1+q)(b+q^2)+q(a^2-b^2)}{ab(a-bq)(b-aq)}.
 \enm
Substituting the last two relations into Theorem \ref{thm-k}, we
achieve Theorem \ref{thm-l}.
\end{proof}

\begin{corl}[$b=\frac{a^2q+aq^2}{a+q^3}$ in Theorem \ref{thm-l}]\label{corl-f}
 \bnm
\qqdn\theta\bigg(q,\frac{a^2q+aq^2}{a+q^3}\bigg)&&\xqdn\!=\frac{(a+q^2)(a+q^3)}{(a+q)(a^2+aq+aq^2+q^4)}\\
&&\xqdn\!-\,\frac{a^2(1-q)^2(1+q)}{(a+q)(a^2+aq+aq^2+q^4)}S\bigg(a,\frac{a^2q+aq^2}{a+q^3}\bigg).
 \enm
\end{corl}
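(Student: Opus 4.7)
The proof will proceed by choosing the parameter $b$ in Theorem \ref{thm-l} so that the coefficient of $\theta(q,a)$ vanishes, and then solving for $\theta(q,b)$.

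First I would observe that the coefficient of $\theta(q,a)$ in Theorem \ref{thm-l} is $\frac{ab-a^2q-aq^2+bq^3}{b(a-bq)(b-aq)}$. Setting the numerator to zero gives
\[
b(a+q^3)=a^2q+aq^2=aq(a+q),
\]
which is exactly the relation $b=\frac{a^2q+aq^2}{a+q^3}$. Substituting this into Theorem \ref{thm-l}, the entire $\theta(q,a)$-term drops out, leaving the single equation
\[
-\frac{ab-b^2q-bq^2+aq^3}{a(a-bq)(b-aq)}\theta(q,b)+\frac{q^2(a^2-b^2)}{ab(a-bq)(b-aq)}=\frac{a-b}{ab}S(a,b),
\]
from which $\theta(q,b)$ can be isolated directly.

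The remaining task is algebraic simplification of the coefficients under the constraint $b(a+q^3)=aq(a+q)$. Using this relation, I would compute
\[
ab-b^2q-bq^2+aq^3=q(a-b)(a+b+q+q^2),
\]
which follows from replacing $ab$ by $a^2q+aq^2-bq^3$ and factoring. A short computation gives $a-bq=\frac{a^2(1-q)(1+q)}{a+q^3}$ and $b-aq=\frac{aq^2(1-q)(1+q)}{a+q^3}$, hence
\[
(a-bq)(b-aq)=\frac{a^3q^2(1-q)^2(1+q)^2}{(a+q^3)^2}.
\]
I would also simplify $a+b=\frac{a(a+aq+q^2+q^3)}{a+q^3}=\frac{a(1+q)(a+q^2)}{a+q^3}$ and expand $(a+q^3)(a+b+q+q^2)=(1+q)(a^2+aq+aq^2+q^4)$.

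Plugging these simplifications into the isolated formula for $\theta(q,b)$, the constant term becomes $\frac{(a+q^2)(a+q^3)}{(a+q)(a^2+aq+aq^2+q^4)}$ and the coefficient of $S(a,b)$ becomes $-\frac{a^2(1-q)^2(1+q)}{(a+q)(a^2+aq+aq^2+q^4)}$, which is exactly the claimed formula. The only real obstacle is the algebraic bookkeeping — verifying the identity $(a+q^3)(a+b+q+q^2)=(1+q)(a^2+aq+aq^2+q^4)$ under the substitution, which is a routine polynomial manipulation.
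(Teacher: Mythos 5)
Your proposal is correct and matches the paper's approach exactly: the paper presents Corollary \ref{corl-f} as the direct specialization $b=\frac{a^2q+aq^2}{a+q^3}$ of Theorem \ref{thm-l}, which is precisely the choice that kills the $\theta(q,a)$ coefficient, and your algebraic simplifications (in particular $(a+q^3)(a+b+q+q^2)=(1+q)(a^2+aq+aq^2+q^4)$ and the factorizations of $a-bq$ and $b-aq$) all check out. Nothing further is needed.
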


Corollaries \ref{corl-b} and \ref{corl-f} are both strange partial
theta function identities. However, they are different with each
other.

In accordance with the proof of Theorem \ref{thm-e}, it is routine
to express $P_4(a,b)$ via $\{\theta(q,a/q^k)\}_{k=0}^2$ and
$\{\theta(q,b/q^k)\}_{k=0}^2$. The result is displayed in the
following theorem.

\begin{thm}\label{thm-m}
 Let $a$ and $b$ be complex numbers. Then
 \bnm
V(a,b)-V(b,a)=\frac{(a-bq)(a-bq^2)(aq-b)(aq^2-b)}{a^3b^3q(a+b)(1+q+q^2)}P_4(a,b),
 \enm
where the symbol on the left-hand side denotes
  \bnm
\xqdn V(a,b)&&\xqdn\!=\frac{(b^2+b+bq+q^2)\{ab+abq^2+(a+b)^2q\}}{b^3q(a+b)(1+q+q^2)}\theta(q,a)\\
 &&\xqdn\!+\:\frac{(b^2+bq+bq^2+q^4)(a+aq+aq^2+bq)}{ab^2q^2(a+b)}\theta(q,a/q)\\
 &&\xqdn\!+\:\frac{b^2+bq^2+bq^3+q^6}{a^2b^2}\theta(q,a/q^2).
 \enm
\end{thm}

\begin{corl}\label{corl-g}
 Let $a$ be a complex number. Then
 \bnm
&&\qqdn\xqdn\xxqdn\frac{a^2-aq-aq^3+q^5}{2q^2(1+q+q^2)}\theta(q,a)+\frac{a^2+aq+aq^3+q^5}{2q^2(1+q+q^2)}\theta(q,-a)+1\\
&&\qqdn\xqdn\xxqdn\:=\:\frac{(1+q)(1+q^2)}{1+q+q^2}P_4(a,-a).
 \enm
\end{corl}

\begin{proof}
The case $b=-a$ of Theorem \ref{thm-m} reads
 \bmn\label{relation}
&&\frac{a-a^2+aq-q^2}{1+q+q^2}\theta(q,a)+\frac{aq-a^2+aq^2-q^4}{aq}\theta(q,a/q)
\nnm\\\nnm
&&-\frac{a+a^2+aq+q^2}{1+q+q^2}\theta(q,-a)+\frac{aq+a^2+aq^2+q^4}{aq}\theta(q,-a/q)\\
&&=\frac{2(1+q)^2(1+q^2)}{1+q+q^2}P_4(a,-a)
 \emn
Above all, we have the relation
 \bnm
&&\frac{a-a^2+aq-q^2}{1+q+q^2}\theta(q,a)+\frac{aq-a^2+aq^2-q^4}{aq}\theta(q,a/q)\\
&&=\frac{a-a^2+aq-q^2}{1+q+q^2}\theta(q,a)+\frac{aq-a^2+aq^2-q^4}{aq}\bigg\{1-\frac{a}{q}\theta(q,a)\bigg\}\\
&&=\frac{(1+q)(a^2-aq-aq^3+q^5)}{q^2(1+q+q^2)}\theta(q,a)+\frac{aq-a^2+aq^2-q^4}{aq}.
 \enm
In the next place, we attain another relation
 \bnm
 &&-\frac{a+a^2+aq+q^2}{1+q+q^2}\theta(q,-a)+\frac{aq+a^2+aq^2+q^4}{aq}\theta(q,-a/q)\\
&&=\frac{(1+q)(a^2+aq+aq^3+q^5)}{q^2(1+q+q^2)}\theta(q,-a)+\frac{aq+a^2+aq^2+q^4}{aq}.
 \enm
Substituting the last two relations into \eqref{relation}, we obtain
Corollary \ref{corl-g}.
\end{proof}

 \textbf{Acknowledgments}

 The work is supported by the National Natural Science Foundation of China (No. 11661032).


\end{document}